\theoremstyle{plain}
\newtheorem{theorem}{Theorem}[section]
\newtheorem{thm}[theorem]{Theorem}
\newtheorem{lemma}[theorem]{Lemma}
\newtheorem{proposition}[theorem]{Proposition}
\newtheorem{prop}[theorem]{Proposition}
\newtheorem{example}[theorem]{Example}
\newtheorem{definition}[theorem]{Definition}
\newtheorem{remark}[theorem]{Remark}
\newtheorem{cor}[theorem]{Corollary}
\newtheorem{conjecture}[theorem]{Conjecture}
\newcommand{\C}{\mathbb{C}}
\newcommand{\K}{\mathbb{K}}
\DeclareMathOperator{\mfS}{\mathfrak{S}}
\DeclareMathOperator{\Gr}{Gr}
\DeclareMathOperator{\Fl}{F\ell}
\newcommand{\bull}{\bullet}
\DeclareMathOperator{\Span}{span}
\begin{document}

\title[Pattern Avoidance and Schubert Varieties]{Pattern avoidance and fiber bundle structures on Schubert varieties}

\author{Timothy Alland}
\email{tim.alland@okstate.edu}

\author{Edward Richmond}
\email{edward.richmond@okstate.edu}

\maketitle
\begin{abstract}

We give a permutation pattern avoidance criteria for determining when the projection map from the flag variety to a Grassmannian induces a fiber bundle structure on a Schubert variety.   In particular, we introduce the notion of a split pattern and show that a Schubert variety has such a fiber bundle structure if and only if the corresponding permutation avoids the split patterns $3|12$ and $23|1$.  Continuing, we show that a Schubert variety is an iterated fiber bundle of Grassmannian Schubert varieties if and only if the corresponding permutation avoids (non-split) patterns 3412, 52341, and 635241.  This extends a combined result of Lakshmibai-Sandhya, Ryan, and Wolper who prove that Schubert varieties whose permutation avoids the ``smooth" patterns 3412 and 4231 are iterated fiber bundles of smooth Grassmannian Schubert varieties.


\end{abstract}
\section{Introduction}

Let $\K$ be an algebraically closed field and let
$$\Fl(n):=\{V_\bull=V_1\subset V_2 \subset\cdots \subset V_{n-1}\subset\K^n \ |\ \dim(V_i)=i\}$$
denote the complete flag variety on $\K^n.$  For each $r\in\{1,\ldots,n-1\}$, let
$$\Gr(r,n):=\{V\subset \K^n\ |\ \dim(V)=r\}$$ denote the Grassmannian of $r$-dimensional subspaces of $\K^n$ and consider the natural projection map
\begin{equation}\label{Eq:Projection map}
\pi_r:\Fl(n)\twoheadrightarrow \Gr(r,n)
\end{equation}
given by $\pi_r(V_\bull)= V_r.$  It is easy to see that the projection $\pi_r$ is a fiber bundle on $\Fl(n)$ with fibers isomorphic to $\Fl(r)\times\Fl(n-r)$.  The goal of this paper is to give a pattern avoidance criteria for when the map $\pi_r$ restricted to a Schubert variety of $\Fl(n)$ is also a fiber bundle.

\smallskip

Fix a basis $\{e_1,\ldots,e_n\}$ of $\K^n$ and let $E_i:=\Span\langle e_1,\ldots,e_i\rangle$.  Each permutation $w=w(1)\cdots w(n)$ of the symmetric group $W:=\mfS_n$ defines a Schubert variety
$$X_w:=\{V_\bull\in\Fl(n)\ |\ \dim(E_i\cap V_j)\geq r_w[i,j]\}$$
where $r_w[i,j]:=\#\{k\leq j \ |\ w(k)\leq i\}$.

\begin{thm}\label{T:main}
Let $r<n$ and $w\in W$. The projection $\pi_r$ restricted to $X_w$ is a Zariski-locally trivial fiber bundle if and only if $w$ avoids the split patterns $3|12$ and $23|1$ with respect to position $r$.
\end{thm}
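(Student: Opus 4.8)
The plan is to exploit the $B$-equivariance of $\pi_r$ to reduce the fiber bundle question to a combinatorial comparison of fibers over the $T$-fixed points of the base, and then to match the resulting rank computations with the two split patterns. Since $\pi_r$ is $G$-equivariant and $X_w$ is $B$-stable, the restriction $\pi_r|_{X_w}$ is $B$-equivariant with image a Grassmannian Schubert variety $X^{P_r}_{w^{P_r}} \subseteq \Gr(r,n)$, where $P_r$ is the maximal parabolic with $W_{P_r} = \mfS_{\{1,\dots,r\}} \times \mfS_{\{r+1,\dots,n\}}$ and $w = w^{P_r}\, u$ is the length-additive factorization into the minimal coset representative $w^{P_r} \in W^{P_r}$ and $u = (u_1,u_2) \in W_{P_r}$. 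The base is a union of $B$-orbits (Schubert cells), each a single orbit on which the restricted map is automatically trivial; hence the whole map is a Zariski-locally trivial fiber bundle if and only if the fibers over all $T$-fixed points $e_v$ (for $v \le w^{P_r}$ in $W^{P_r}$) are mutually isomorphic. First I would make this reduction precise.

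Next I would compute the fiber over a $T$-fixed point $e_v$ explicitly. Fixing $V_r = \Span\langle e_{v(1)},\dots,e_{v(r)}\rangle$, the defining rank conditions of $X_w$ split according to whether $j<r$ or $j>r$: those with $j<r$ cut out a Schubert variety $X_{u_1(v)}$ for the induced flag in $V_r \iso \K^r$ relative to the coordinate reference flag $(E_i \cap V_r)_i$, while those with $j>r$ cut out a Schubert variety $X_{u_2(v)}$ in $\K^n/V_r \iso \K^{n-r}$. Thus every fiber is a product $X_{u_1(v)} \times X_{u_2(v)}$ of Schubert varieties in $\Fl(r)\times\Fl(n-r)$, and combinatorially its dimension equals $\max\{\ell(u') : u'\in W_{P_r},\ v u' \le w\}$. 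Over the generic point $v=w^{P_r}$ this is $\ell(u)=\ell(u_1)+\ell(u_2)$, with fiber $X_{u_1}\times X_{u_2}$. The reduction then says that $\pi_r|_{X_w}$ is a fiber bundle if and only if $(u_1(v),u_2(v))=(u_1,u_2)$ for every $v \le w^{P_r}$ in $W^{P_r}$, a condition I would verify is detected by the fiber dimension, since the data $u_i(v)$ only grows as $v$ decreases.

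The heart of the argument is translating this constancy into avoidance of $3|12$ and $23|1$ at $r$. For necessity, an occurrence of $3|12$ -- positions $a \le r < b < c$ with $w(b) < w(c) < w(a)$ -- lets me build $v < w^{P_r}$ and $u' \in \mfS_{\{r+1,\dots,n\}}$ with $vu' \le w$ and $\ell(u') > \ell(u_2)$, so the fiber over $e_v$ gains extra dimension in the $\Fl(n-r)$ factor and the bundle structure fails; the toy case $w=312$, $r=1$, where the fiber jumps from a point to $\mbP^1$ over $V_1=\langle e_1\rangle$, is the prototype. An occurrence of $23|1$ -- positions $a<b\le r<c$ with $w(c)<w(a)<w(b)$ -- produces the mirror failure in the $\Fl(r)$ factor. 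These two directions are interchanged by the duality $\Gr(r,n)\iso\Gr(n-r,n)$ together with $w \mapsto w_0 w w_0$, so it suffices to treat one.

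The main obstacle is the converse: showing that if $w$ avoids both split patterns then no fiber can jump, i.e.\ every $v \le w^{P_r}$ forces $(u_1(v),u_2(v))=(u_1,u_2)$. Here I would argue by contradiction: a genuine jump supplies $u'\in W_{P_r}$ and $v<w^{P_r}$ with $vu'\le w$ but $\ell(u')>\ell(u)$, and I would extract from the subword criterion for $vu' \le w$ a triple of positions straddling $r$ that realizes one of the two patterns. Controlling exactly which rank inequality is violated when the induced Schubert data $u_i(v)$ changes -- and checking that the offending triple always lands in the prescribed split configuration rather than an ordinary (non-split) $312$ or $231$ -- is the delicate bookkeeping step, and is where the split nature of the patterns, as opposed to the classical ones, becomes essential. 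With this combinatorial lemma in hand, the sufficiency direction is completed either by invoking the equivalence between such parabolic (Billey--Postnikov) factorizations and fiber bundle structures, or by directly building local trivializations over the opposite big cell of the base using the unipotent radical of $P_r$.
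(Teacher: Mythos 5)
Your high-level architecture parallels the paper's --- use the $B$-equivariant geometry to reduce Theorem \ref{T:main} to a combinatorial statement, then match that statement with the two split patterns --- but the combinatorial statement you reduce to is substantially harder than the one the paper uses, and the step that would prove it is missing. The paper invokes Proposition \ref{P:BP_def} (from \cite{RS14}) to reduce everything to showing that $w$ avoids $3|12$ and $23|1$ at position $r$ if and only if $S(v)\setminus\{s_r\}\subseteq D_L(u)$, a condition involving only the left descents of the single permutation $u$; this is then verified by a short region analysis of the permutation array (setting $m=\max\{w(k):k\le r\}$ and $l=\min\{w(k):k>r\}$, so that $S(v)=\{s_k : l\le k<m\}$ by Lemma \ref{L:support_v}, and translating avoidance into ``regions $B,G$ empty and $C,F$ decreasing,'' which is exactly the descent condition by Lemma \ref{L:descent}). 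Your reduction target is instead fiber constancy over all $T$-fixed points: every $v'\le w^{P_r}$ in $W^{P_r}$ and every $u'\in W_{P_r}$ (the paper's $W^J$ and $W_J$) with $v'u'\le w$ must satisfy $u'\le u$. That is a condition quantified over a whole Bruhat interval, and the implication that actually carries the theorem --- avoidance of the two split patterns rules out every such violation --- is precisely the part you defer as ``delicate bookkeeping,'' with no argument given. What you have is a credible sketch of the necessity direction (pattern occurrence $\Rightarrow$ fiber jump) only; the sufficiency direction, which is the heart of the theorem, is absent.

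Two geometric steps also need repair. First, your assertion that each fiber over a $T$-fixed point is a single product $X_{u_1(v')}\times X_{u_2(v')}$, and hence that constancy ``is detected by the fiber dimension,'' silently uses the fact that $\{u'\in W_{P_r} : v'u'\le w\}$ has a unique maximal element; this is true but nontrivial, and without it a fiber could a priori be a reducible union of such products of the same dimension, in which case the dimension test fails. Second, and more seriously, the inference ``trivial over each $B$-orbit, plus mutually isomorphic $T$-fixed fibers, hence Zariski-locally trivial'' is not a proof: stratum-wise triviality with isomorphic fibers does not glue to local triviality in general (isotrivial families are the standard obstruction), and in this setting the implication holds only because of the theorem of \cite{RS14}. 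Your first fallback, ``invoking the BP equivalence,'' would still require showing that your fiber-constancy condition implies the BP condition $S(v)\cap J\subseteq D_L(u)$ of Proposition \ref{P:BP_def}; this bridge (which can be built from the characterization of BP decompositions via the maximal element of $W_J\cap[e,w]$, together with the monotonicity of fibers you observe) is never constructed. Your second fallback, building local trivializations by hand over the big cell, amounts to reproving the main theorem of \cite{RS14} and is not sketched either.
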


If a permutation avoids a split pattern with respect to every position $r<n$, then that permutation avoids the pattern in the classical sense.  For a precise definition of split pattern avoidance, see Definition \ref{D:split patterns}.  Pattern avoidance has been used to combinatorially describe many geometric properties of Schubert varieties.  Most notably, Lakshmibai and Sandhya prove that a Schubert variety $X_w$ is smooth if and only if $w$ avoids the patterns 3412 and 4231 \cite{LS90}.  Pattern avoidance has also been used to determine when Schubert varieties are defined by inclusions, Gorenstein, factorial, have small resolutions and are local complete intersections \cite{BW03,De90,GR02,BMB07,WY06,UW13}.  For a survey of these results and many others see \cite{AB14}.

\subsection{Complete parabolic bundle structures}

For any positive integer $n$, define the set $[n]:=\{1,\ldots,n\}$.  The varieties $\Fl(n)$ and $\Gr(r,n)$ are the extreme examples in the collection of partial flag varieties on $\K^n$.  For any subset $\textbf{a}:=\{a_1<\cdots <a_k\}\subseteq[n-1]$, define the partial flag variety
$$\Fl(\textbf{a},n):=\{V^{\textbf{a}}_\bull:=V_{a_1}\subset V_{a_2} \subset\cdots \subset V_{a_k}\subseteq\K^n \ |\ \dim(V_{a_i})=a_i\}.$$
If $\textbf{b}\subseteq\textbf{a}$, then there is a natural projection map
$$\pi^{\textbf{a}}_{\textbf{b}}:\Fl(\textbf{a},n)\twoheadrightarrow\Fl(\textbf{b},n)$$ given by $\pi^{\textbf{a}}_{\textbf{b}}(V^{\textbf{a}}_\bull)=V^{\textbf{b}}_\bull$.  In other words, the map $\pi^{\textbf{a}}_{\textbf{b}}$ forgets the components of $V^{\textbf{a}}_\bull$ indexed by $\textbf{a}\setminus\textbf{b}$.  Note that the map $\pi_r=\pi^{[n-1]}_{\{r\}}$ from Equation \ref{Eq:Projection map}.  Any permutation $\sigma=\sigma(1)\cdots\sigma(n-1)\in\mfS_{n-1}$ defines a collection of nested subsets
$$\sigma_1\subset\sigma_2\subset\cdots\subset\sigma_{n-2}\subset \sigma_{n-1}=[n-1]\quad \text{where}\quad \sigma_i:=\{\sigma(1),\ldots,\sigma(i)\}.$$
The maps $\pi^{\sigma_{i}}_{\sigma_{i-1}}$ induce an iterated fiber bundle structure on the complete flag variety
\begin{equation}\label{Eq:Flag_bundle_map}
\Fl(n)\overset{\pi^{[n-1]}_{\sigma_{n-2}}}{\twoheadrightarrow} \Fl(\sigma_{n-2},n)\overset{\pi^{\sigma_{n-2}}_{\sigma_{n-3}}}{\twoheadrightarrow}\cdots\overset{\pi^{\sigma_{3}}_{\sigma_{2}}}{\twoheadrightarrow} \Fl(\sigma_2,n)\overset{\pi^{\sigma_{2}}_{\sigma_{1}}}{\twoheadrightarrow}
\Fl(\sigma_1,n)\twoheadrightarrow pt
\end{equation}
where the fibers of each map $\pi^{\sigma_{i}}_{\sigma_{i-1}}$ are isomorphic to Grassmannians.
\begin{definition}\label{D:parabolic bundle}
Let $w\in W$.  We say $X_w$ has a \textbf{complete parabolic bundle structure} if there is a permutation $\sigma\in\mfS_{n-1}$ such that the maps $\pi^{\sigma_{i}}_{\sigma_{i-1}}$ induce an iterated fiber bundle structure on the Schubert variety
\begin{equation}\label{Eq:Flag_bundle_map_Schubert}
X_w=X_{n-1}\overset{\pi^{[n-1]}_{\sigma_{n-2}}}{\twoheadrightarrow} X_{n-2}\overset{\pi^{\sigma_{n-2}}_{\sigma_{n-3}}}{\twoheadrightarrow}\cdots\overset{\pi^{\sigma_{3}}_{\sigma_{2}}}{\twoheadrightarrow} X_2\overset{\pi^{\sigma_{2}}_{\sigma_{1}}}{\twoheadrightarrow}
X_1\twoheadrightarrow pt
\end{equation}
where $X_i:=\pi^{[n-1]}_{\sigma_{i}}(X_n)\subseteq\Fl(\sigma_i,n)$.  In other words, each map $$\pi^{\sigma_{i}}_{\sigma_{i-1}}: X_i\twoheadrightarrow X_{i-1}$$ is a Zariski-locally trivial fiber bundle.
\end{definition}
Not every Schubert variety has a complete parabolic bundle structure.  The smallest such Schubert variety is $X_{3412}$ (See Example \ref{Ex:3}).  When $\K=\C$, Ryan showed that any smooth Schubert variety has complete parabolic bundle structure \cite{Ry87}.  Wolper later generalized this result to include Schubert varieties over any algebraically closed field \cite{Wo89}.  Combining these results with the Lakshmibai-Sandhya smoothness criteria, we have:
\begin{thm}\label{T:Ryan_LS}\emph{(\cite{Ry87,Wo89,LS90})}
If $w$ avoids patterns $3412$ and $4231$, then $X_w$ has a complete parabolic bundle structure.
\end{thm}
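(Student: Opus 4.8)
The plan is to combine the two cited inputs and then isolate the one substantive implication. Since $w$ avoids $3412$ and $4231$, the smoothness criterion of Lakshmibai--Sandhya \cite{LS90} shows immediately that $X_w$ is smooth, so the theorem reduces to the assertion of Ryan \cite{Ry87} and Wolper \cite{Wo89} that every smooth Schubert variety admits a complete parabolic bundle structure. I would prove this implication by induction on $n$, the base case $n=1$ being trivial. For the inductive step the goal is to peel off a single projection that is already a Zariski-locally trivial fiber bundle on $X_w$ and then recurse on its fiber.

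For the inductive step, take $X_w$ smooth and nontrivial and choose a position $r$ at which $\pi_r$ restricted to $X_w$ is a Zariski-locally trivial fiber bundle. Its image $\pi_r(X_w)\subseteq\Gr(r,n)$ is a Grassmannian Schubert variety, and its fiber is a product $X_u\times X_v$ of Schubert varieties inside the fiber $\Fl(r)\times\Fl(n-r)$ of $\pi_r$. Local triviality forces the smoothness of $X_w$ to descend to both base and fiber, so $\pi_r(X_w)$ is a smooth Grassmannian Schubert variety and $X_u\subseteq\Fl(r)$, $X_v\subseteq\Fl(n-r)$ are smaller smooth Schubert varieties, to which the induction hypothesis applies. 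I would then assemble the tower of Definition \ref{D:parabolic bundle} by declaring the dimension $r$ to be adjoined first, so that $X_1=\pi_r(X_w)$, and interleaving the towers furnished inductively for $X_u$ (on the dimensions below $r$) and $X_v$ (on the dimensions above $r$); the relative version of each step over the current base is again a Grassmannian Schubert bundle because $\pi_r|_{X_w}$ is itself locally trivial with fiber $X_u\times X_v$. This produces the required $\sigma\in\mfS_{n-1}$ and realizes the tower \eqref{Eq:Flag_bundle_map_Schubert}.

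The crux is the existence of a good position $r$, and it is precisely here that the smoothness hypothesis must be spent. In the language of this paper the point becomes transparent: by Theorem \ref{T:main}, $\pi_r|_{X_w}$ is a Zariski-locally trivial fiber bundle exactly when $w$ avoids the split patterns $3|12$ and $23|1$ at position $r$, so it would suffice to show combinatorially that a permutation avoiding the classical patterns $3412$ and $4231$ must avoid both split patterns at some $r<n$, and that the factored permutations $u,v$ attached to the fiber again avoid $3412$ and $4231$ (the compatibility needed to close the induction). Establishing the local triviality underlying this statement, equivalently the original arguments of \cite{Ry87,Wo89}, is the geometric heart: one trivializes the bundle over the big cell of the base $\pi_r(X_w)$ and spreads the trivialization across a cover of the base via the group action, upgrading a pointwise isomorphism of fibers to genuine Zariski-local triviality. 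I expect this existence-plus-local-triviality package to be the main obstacle, with the smoothness-inheritance and tower-assembly steps being essentially formal.
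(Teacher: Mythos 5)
Your proposal has the right skeleton but a genuine gap at its center. The induction hinges on the existence claim that every permutation avoiding $3412$ and $4231$ admits a position $r<n$ at which it avoids the split patterns $3|12$ and $23|1$ (with $s_r$ in the support, so the recursion makes progress), together with the closure of the avoidance class under passing to the fiber permutations. You never prove this; you explicitly defer it as ``the main obstacle.'' But in this framework that existence statement \emph{is} the substance of the theorem: it is precisely the analogue of Proposition \ref{P:exists_BP}, whose proof is the long case analysis on permutation arrays occupying most of Section 3. A write-up that ends with ``it would suffice to show'' the key combinatorial fact is a plan, not a proof. Separately, your concern that one must re-establish the geometric local triviality of \cite{Ry87,Wo89} is misplaced inside this paper: Proposition \ref{P:BP_def} (quoted from \cite{RS14}) already packages Zariski-local triviality as the BP condition, and Theorem \ref{T:main} converts it into split-pattern avoidance, so the only missing ingredient is combinatorial, not geometric. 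Likewise your smoothness-descent and tower-interleaving steps, while correct in outline, duplicate what Proposition \ref{P:BP_complete} (also cited from \cite{RS14}) already supplies.

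For comparison, the paper does not prove this statement at all: it is quoted as the combination of \cite{LS90} (avoidance of $3412$ and $4231$ is equivalent to smoothness) with \cite{Ry87,Wo89} (smooth Schubert varieties have complete parabolic bundle structures). Moreover, once the paper's own results are available, the theorem follows in two lines from Theorem \ref{T:main2}: both $52341$ and $635241$ contain the pattern $4231$ (via the subsequences $5,2,3,1$ and $6,3,5,1$ respectively), so a permutation avoiding $3412$ and $4231$ automatically avoids $3412$, $52341$ and $635241$, and Theorem \ref{T:main2} then yields the complete parabolic bundle structure. Note also that your opening reduction through \cite{LS90} buys nothing: you convert pattern avoidance into smoothness only to convert it back into (split) pattern avoidance when you invoke Theorem \ref{T:main}, so the smoothness detour can be deleted entirely.
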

An analogous result to Theorem \ref{T:Ryan_LS} holds true for rationally smooth Schubert varieties of any finite type \cite{RS14}.  We remark that the converse of Theorem \ref{T:Ryan_LS} is false.  For example, the permutation $\sigma=213$ induces a complete parabolic bundle structure on $X_{4231}$ (See Example \ref{Ex:2}).  One application of Theorem \ref{T:main} is a pattern avoidance characterization of Schubert varieties that have complete parabolic bundle structures.

\begin{thm}\label{T:main2}
The permutation $w$ avoids patterns $3412$, $52341$ and $635241$ if and only if the Schubert variety $X_w$ has a complete parabolic bundle structure.
\end{thm}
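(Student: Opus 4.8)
The plan is to turn Definition~\ref{D:parabolic bundle} into a recursion powered by Theorem~\ref{T:main}. Call a position $r<n$ \emph{good} for $w$ if $w$ avoids the split patterns $3|12$ and $23|1$ at $r$, so that $\pi_r\colon X_w\to B\subseteq\Gr(r,n)$ is a Zariski-locally trivial fiber bundle with Grassmannian Schubert base $B$. For every $r$ the parabolic factorization of $w$ through $\mfS_r\times\mfS_{n-r}$ identifies the fiber of $\pi_r$ over the base point with $X_u\times X_v\subseteq\Fl(r)\times\Fl(n-r)$, where $u\in\mfS_r$ and $v\in\mfS_{n-r}$ are the flattenings of $w(1)\cdots w(r)$ and of $w(r+1)\cdots w(n)$. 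My first step is to prove the recursion: $X_w$ has a complete parabolic bundle structure if and only if $n\le 1$, or there is a good $r$ for which both $u$ and $v$ have complete parabolic bundle structures. For the forward direction take $r=\sigma(1)$; the composite to the bottom of the tower is $\pi_r$, so $r$ is good, and restricting the tower over a point of $B$ yields the structures on $u$ and $v$. Conversely, put $\sigma(1)=r$ and merge the two block orderings above it; the product structure on the fiber of $\pi_r$ keeps every single-index forget a fiber bundle.

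The recursion reduces everything to two combinatorial facts about good positions. The first is that good cuts cannot break the forbidden patterns: if $r$ is good then it does not split any occurrence of $3412$, $52341$, or $635241$, i.e.\ each such occurrence lies in $[1,r]$ or in $[r+1,n]$. I would prove this from the reformulation that $r$ is good exactly when the entries of $w(r+1)\cdots w(n)$ below $\max_{i\le r}w(i)$ are decreasing and the entries of $w(1)\cdots w(r)$ above $\min_{k>r}w(k)$ are decreasing. For each of the three patterns and each proper prefix--suffix split of its positions, the prefix carries an ascent above the suffix minimum or the suffix carries an ascent below the prefix maximum, yielding a $23|1$ or a $3|12$ among the pattern positions and contradicting goodness; this is a finite check. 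Since flattening a contiguous block preserves relative order, any occurrence of a pattern in $u$ or $v$ is also one in $w$, so avoidance is automatically inherited by the flattenings.

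Granting the crux below, the theorem follows by induction on $n$, the base case $n\le 2$ being trivial. If $w$ avoids the three patterns, the crux gives a good $r$; the flattenings $u,v$ again avoid them by inheritance, so by induction they carry complete parabolic bundle structures, and the recursion lifts these to $X_w$. Conversely, if $X_w$ has the structure then the recursion produces a good $r$ with $u,v$ again carrying it, whence $u,v$ avoid the patterns by induction; were $w$ to contain one, the non-splitting fact would trap that occurrence inside $u$ or $v$, a contradiction. It remains to prove the crux: if $w$ avoids $3412$, $52341$, and $635241$, then $w$ has a good position.

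This existence statement is the main obstacle. Its contrapositive asks that any $w$ in which \emph{every} position is bad must contain one of the three patterns, and the difficulty is that being all-bad is not closed under pattern containment---$34125$ is all-bad yet has the good position $r=4$---so one cannot simply invoke minimal patterns. I would argue directly with the reformulation of goodness. Badness at $r=1$ forces an ascent below $w(1)$ and badness at $r=n-1$ forces an ascent above $w(n)$, planting a $312$ near the front and a $231$ near the back. Assuming $w$ avoids the two shorter patterns $3412$ and $52341$, I would track, cut by cut, the ascent that badness forces at each position, and show by a case analysis on the locations of the global maximum, the global minimum, and these forced ascents that they must interlock into the six entries of a $635241$. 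Carrying out this interlocking argument under the avoidance hypotheses is where essentially all of the work lies.
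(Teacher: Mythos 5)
Your outline reduces Theorem \ref{T:main2} to two combinatorial facts, and the reduction itself is sound: your ``non-splitting'' finite check (a straddling occurrence of $3412$, $52341$ or $635241$ across a good cut always produces a $3|12$ or $23|1$) is exactly the argument the paper uses in the converse direction of its proof, and your recursion is morally the paper's induction. But the statement you defer --- the ``crux'' that every $w$ avoiding $3412$, $52341$ and $635241$ has a good position --- is precisely Proposition \ref{P:exists_BP} of the paper, and it is where essentially all of the paper's work lies: a delicate proof by contradiction on the permutation array (Figures \ref{F:Case0}--\ref{F:Case3c}), in which one chooses a $3|12$ occurrence at $r=1$ with extremal nodes so that certain regions are empty, uses $3412$-avoidance to force other regions empty or decreasing, splits into cases according to whether $w$ contains $3|12$ or $23|1$ at the middle position $i$ and whether a corner region is empty, and in the hardest case introduces a second nested split at $i'$ before exhibiting $52341$ or $635241$. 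Your paragraph about ``tracking, cut by cut, the ascent that badness forces'' and showing the forced ascents ``must interlock into the six entries of a $635241$'' is a plan for this case analysis, not a proof of it; you say so yourself. Since every other step of your proposal funnels into this unproven statement, the proposal does not establish the theorem.

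A secondary gap is in your recursion lemma, which you justify geometrically rather than combinatorially. The forward direction assumes that the composite of the Zariski-locally trivial maps in the tower of Definition \ref{D:parabolic bundle} (which equals $\pi_{\sigma(1)}$ restricted to $X_w$) is again Zariski-locally trivial; composition of Zariski-locally trivial bundles is not a formal triviality, and likewise ``merging the two block orderings'' in the converse requires showing that local trivializations of $\pi_r$ propagate up the tower. The paper avoids both issues by immediately invoking Proposition \ref{P:BP_complete} (from \cite{RS14}) to replace ``complete parabolic bundle structure'' with the purely combinatorial ``complete BP decomposition,'' after which the induction runs on $\ell(w)$ using Theorem \ref{T:main}, Lemma \ref{L:Parabolic perm_structure}, and Proposition \ref{P:exists_BP}. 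If you route your recursion through Proposition \ref{P:BP_complete} in the same way, that part of your argument becomes rigorous; the missing mathematics is then exactly Proposition \ref{P:exists_BP}.
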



The key property used to prove both Theorems \ref{T:main} and \ref{T:main2} is the notion of a Billey-Postnikov (BP) decomposition $w=vu$ of a permutation (see Proposition \ref{P:BP_def} for the definition).  The term BP decomposition was originally used in \cite{OY10} to describe a certain factorization condition on the Poncar\'{e} polynomials of $w,v,u$ observed by Billey and Postnikov in \cite{BP05}.  Since then, several equivalent conditions have been given to describe this property (see \cite[Section 4]{RS14}).


\subsection{Applications to enumeration}

In the unpublished work \cite{Ha92}, Haiman calculates the generating series for the number of smooth Schubert varieties in $\Fl(n)$ (or equivalently, the number of permutations avoiding $3412$ and $4231$) using complete parabolic bundle structures.  This generating series also appears in \cite{Bo98} and \cite{BMB07}.  Analogous parabolic bundle structures were used to calculate the generating series of smooth and rationally smooth Schubert varieties of any classical type by the second author and Slofstra in \cite{RS15}.  To facilitate this calculation, a new combinatorial structure called a staircase diagram over a graph was introduced.  One immediate corollary of Theorem \ref{T:main2} and \cite[Theorem 5.1]{RS15} is the following.

\begin{cor}\label{C:main2}
Permutations in $\mfS_n$ avoiding the patterns $3412$, $52341$ and $635241$ are in bijection with staircase diagrams over the line graph on $n-1$ vertices with nearly-maximal labellings.
\end{cor}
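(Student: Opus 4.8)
The plan is to obtain the corollary as the composition of two equivalences already available, so that no new combinatorial construction is needed. First, Theorem \ref{T:main2} identifies the set of permutations in $\mfS_n$ avoiding $3412$, $52341$, and $635241$ with the set of permutations $w$ for which the Schubert variety $X_w$ admits a complete parabolic bundle structure. This converts the pattern-avoidance condition, which is the object of the corollary, into the geometric condition of Definition \ref{D:parabolic bundle}: that $X_w$ decomposes, via the maps $\pi^{\sigma_i}_{\sigma_{i-1}}$, as an iterated fiber bundle whose successive fibers are Grassmannian Schubert varieties.

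Second, I would invoke \cite[Theorem 5.1]{RS15}, which realizes exactly these iterated fiber bundle structures combinatorially. The Dynkin diagram of $\mfS_n$ is of type $A_{n-1}$, namely the line graph on $n-1$ vertices, and the data of a complete parabolic bundle structure---the nested sets $\sigma_i$ together with the Grassmannian Schubert varieties appearing as successive fibers---is precisely the data recorded by a staircase diagram over this graph. The nearly-maximal labelling condition is the one encoding when each successive projection $\pi^{\sigma_i}_{\sigma_{i-1}}$ is genuinely a Zariski-locally trivial fiber bundle of (possibly singular) Grassmannian Schubert varieties, as opposed to the stricter labellings used in \cite{RS15} to enumerate smooth or rationally smooth varieties. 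Composing the two equivalences then yields the asserted bijection.

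The step I expect to require the most care is verifying that the hypotheses of \cite[Theorem 5.1]{RS15} match the present notion of complete parabolic bundle structure, and in particular that the nearly-maximal labellings correspond to this notion rather than to the smooth or rationally smooth classes for which staircase diagrams were originally deployed. Concretely, one must check that a maximal chain of Billey--Postnikov decompositions of $w$ (Proposition \ref{P:BP_def}), which is what produces the iterated bundle structure, translates term-by-term into the block structure and labels of a staircase diagram over the line graph, and that this specialization correctly accounts for the parameter $n$. Once this dictionary is confirmed, the bijection is immediate and the corollary follows.
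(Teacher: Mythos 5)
Your proposal takes essentially the same route as the paper, which presents the corollary as an immediate consequence of Theorem \ref{T:main2} (pattern avoidance is equivalent to having a complete parabolic bundle structure, equivalently a complete BP decomposition via Proposition \ref{P:BP_complete}) combined with the bijection of \cite[Theorem 5.1]{RS15} between elements admitting complete BP decompositions and nearly-maximally labelled staircase diagrams over the Coxeter graph, which for $\mfS_n$ is the line graph on $n-1$ vertices. The dictionary you flag as the delicate step is exactly the content of the cited theorem, so nothing further is required.
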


For the definition of staircase diagrams over a graph and nearly-maximal labellings, see \cite{RS15}.  It is likely that a generating function for the number of permutations in $\mfS_n$ avoiding the patterns $3412$, $52341$ and $635241$ can be calculated by enumerating these labelled staircase diagrams using similar techniques in \cite{RS15}.  The first few coefficients of the corresponding generated series are listed in Table \ref{TBL:count} and were calculated using SAGE.

\begin{table}[H]
    \begin{tabular}{cccccccccc}
        \toprule
        $n=1$ & $n=2$ & $n=3$ & $n=4$ & $n=5$ & $n=6$&$n=7$ &$n=8$ &$n=9$ &$n=10$ \\
        \midrule
       1 &  2     & 6     & 23    &   102 & 492 & 2492 &13008 &69267& 374019  \\

        \midrule
    \end{tabular}
    \caption{Number of permutations in $\mfS_n$ avoiding $3412$, $52341$ and $635241$.}
    \label{TBL:count}
\end{table}

\subsection*{Acknowledgements}  The first author was partially supported by Oklahoma State University's Koslow Undergraduate Math Research Experience Scholarship. The second author was partially supported by a NSA Young Investigator's Grant and an Oklahoma State University Dean's Incentive Grant.  The program SAGE was used to collect data on BP decompositions of permutations in relation to pattern avoidance.

\section{Preliminaries}

For any integers $m<n$, define the interval $[m,n]:=\{m,m+1,\ldots,n\}$.  If $m=1$, then we set $[n]:=[1,n]$.  The symmetric group $W:=\mfS_n$ has simple generators $s_1,\ldots, s_{n-1}$ with Coxeter relations
$$s_i^2=1,\quad s_is_{i+1}s_i=s_{i+1}s_i s_{i+1},\quad\text{and}\quad s_is_j=s_js_i\quad\text{if}\ |i-j|>1.$$
We will often view elements $w\in W$ as permutations $w:[n]\rightarrow [n]$ where $s_i$ is identified with the simple transposition $(i,i+1)$.  In one-line notation, we have $w=w(1)w(2)\cdots w(n)$.  Diagrammatically, we represent $w$ with a permutation array with nodes marking the points $(w(i),i)$ using the convention that $(1,1)$ marks the upper left corner.  For example, $w=436125$ corresponds to the array:

$$\begin{tikzpicture}[scale=0.45]
\draw[step=1.0,black] (0,0) grid (5,5);
\fill (0,2) circle (7pt);
\fill (1,3) circle (7pt);
\fill (2,0) circle (7pt);
\fill (3,5) circle (7pt);
\fill (4,4) circle (7pt);
\fill (5,1) circle (7pt);
\end{tikzpicture}$$

A \textbf{split pattern} $w=w_1|w_2\in W$ is a divided permutation where $w_1=w(1)\cdots w(j)$ and $w_2=w(j+1)\cdots w(n)$ for some $j\in[n-1]$.  We use split patterns to make the following modified definition of pattern containment and avoidance.

\begin{definition}\label{D:split patterns}
Let $k,r\leq n$.  Let $w=w(1)\cdots w(n)$ and $u=u(1)\cdots u(j)|u(j+1)\cdots u(k)$.  We say $w$ \textbf{contains the split pattern} $u$ with respect to position $r$ if there exists a sequence $(i_1<\cdots<i_k)\subseteq[n]$ such that
\begin{enumerate}
\item $w(i_1)\cdots w(i_k)$ has the same relative order as $u$
\item $i_j\leq r<i_{j+1}$.
\end{enumerate}
If $w$ does not contain $u$ with respect to position $r$, then we say $w$ \textbf{avoids the split pattern} $u$ with respect to position $r$.
\end{definition}

\begin{example}
Let $w=426135$ and $u=34|12$.  Then $w$ contains the split pattern $u$ with respect to position $r=3$, but avoids the split pattern $u$ with respect to all other positions.
$$\begin{tikzpicture}[scale=0.45]
\draw[step=1.0,black] (0,0) grid (5,5);
\fill (0,2) circle (7pt);\draw[thick] (0,2) circle (11pt);
\fill (1,4) circle (7pt);
\fill (2,0) circle (7pt);\draw[thick] (2,0) circle (11pt);
\fill (3,5) circle (7pt);\draw[thick] (3,5) circle (11pt);
\fill (4,3) circle (7pt);\draw[thick] (4,3) circle (11pt);
\fill (5,1) circle (7pt);
\draw[dashed,thick, red] (2.5,-1)--(2.5,6);
\end{tikzpicture}$$
\end{example}

Note that part (1) of Definition \ref{D:split patterns} is the usual definition of pattern containment.  It is easy to see that if $w$ avoids a split pattern $u$ with respect to all $r\in[n-1]$, then $w$ avoids the non-split pattern $u$ in the usual sense.

We now go over some notation and properties of $W$ as a Coxeter group.  Let $S=\{s_1,\ldots,s_{n-1}\}$ denote the set of simple generators of $W$.  An expression of $w=s_{i_1}\cdots s_{i_k}$ is said to be \textbf{reduced} if $w$ cannot be written in a fewer number of simple generators.  The number of generators used in any reduced expression is called the \textbf{length} of $w$ and is denoted by $\ell(w)$.  If there are reduced expressions $w=s_{i_1}\cdots s_{i_{\ell(w)}}$ and $u=s_{j_1}\cdots s_{j_{\ell(u)}}$ where $(i_1,\ldots,i_{\ell(w)})$ is a subsequence of $(j_1,\ldots,j_{\ell(u)})$, then we say $w\leq u$ in the \textbf{Bruhat partial order} on $W$.  For any $w\in W$, define
\begin{align*}
S(w)&:=\{s\in S \ |\ s\leq w\}\\
D_L(w)&:=\{s\in S \ |\ \ell(sw)<\ell(w)\}\\
D_R(w)&:=\{s\in S \ |\ \ell(ws)<\ell(w)\}
\end{align*}
to be the \textbf{support} and \textbf{left and right descent sets} of $w$ respectively.  For any subset $J\subseteq S$, let $W_J$ denote the parabolic subgroup generated by $J$ and let $W^J$ denote the set of minimal length coset representatives of $W/W_J$.  For each $w\in W$ and $J\subseteq S$, there is a unique \textbf{parabolic decomposition} $w=vu$ where $v\in W^J$ and $u\in W_J$.  In terms of the supports and descents sets, it is well known than $u\in W_J$ if and only if $S(u)\subseteq J$.  Furthermore, we have $v\in W^J$ if and only if $D_R(v)\cap J=\emptyset.$

The parabolic decomposition $w=vu$ with respect to $J=S\setminus\{s_r\}$ can be described explicitly in terms of split patterns.
\begin{lemma}\label{L:Parabolic perm_structure}
Let $w\in W$ and write $$w=w_1|w_2=w(1)\cdots w(r)|w(r+1)\cdots w(n).$$  Let $w=vu$ be the parabolic decomposition with respect to $J=S\setminus\{s_r\}$.  Then
\begin{enumerate}
\item $v=v_1|v_2$ where $v_1$ and $v_2$ respectively consist of the entries of $w_1$ and $w_2$ arranged in increasing order.
\item $u=u_1|u_2$ where $u_1$ and $u_2$ are respectively the unique permutations on $[1,r]$ and $[r+1,n]$ with relative orders of $w_1$ and $w_2$.
\end{enumerate}
\end{lemma}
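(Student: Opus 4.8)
The plan is to exhibit the permutations $v$ and $u$ described in the statement and to verify directly that they satisfy the three defining conditions of the parabolic decomposition with respect to $J=S\setminus\{s_r\}$, namely $v\in W^J$, $u\in W_J$, and $w=vu$; since the parabolic decomposition is unique, this suffices. First I would record what the two membership conditions mean concretely for this $J$. The parabolic subgroup $W_J$ is generated by $\{s_1,\ldots,s_{r-1}\}\cup\{s_{r+1},\ldots,s_{n-1}\}$, so it equals the Young subgroup $\mfS_{[1,r]}\times\mfS_{[r+1,n]}$ of block-preserving permutations. Using the criterion $v\in W^J\iff D_R(v)\cap J=\emptyset$, an element of $W^J$ is precisely a permutation whose only possible right descent is $s_r$, i.e. one that is increasing on $[1,r]$ and increasing on $[r+1,n]$.

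Next I would check the two memberships for the candidates $v=v_1|v_2$ and $u=u_1|u_2$. Since $u_1$ is a permutation of $[1,r]$ and $u_2$ is a permutation of $[r+1,n]$, the element $u$ lies in $\mfS_{[1,r]}\times\mfS_{[r+1,n]}=W_J$. Since $v_1$ lists the entries of $w_1$ in increasing order and $v_2$ lists the entries of $w_2$ in increasing order, the permutation $v$ is increasing on each of the two blocks, so $D_R(v)\subseteq\{s_r\}$ and hence $v\in W^J$.

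The remaining and central step is the identity $w=vu$, which I would verify pointwise, taking care that $w=vu$ means $w(i)=v(u(i))$ (so $u$ is applied first). Fix $i\le r$ and suppose $w(i)$ is the $t$-th smallest among the entries of $w_1$. Because $u_1$ has the same relative order as $w_1$ and takes values in $[1,r]$, its value at position $i$ has rank $t$ among $[1,r]$, i.e. $u_1(i)=t$; and because $v_1$ lists the entries of $w_1$ in increasing order, $v_1(t)$ is exactly the $t$-th smallest entry of $w_1$, namely $w(i)$. Thus $v(u(i))=v_1(u_1(i))=v_1(t)=w(i)$. The computation for $i>r$ is identical, with $[1,r]$ replaced by $[r+1,n]$ and $w_1,u_1,v_1$ replaced by $w_2,u_2,v_2$. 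Hence $w=vu$, and by uniqueness of the parabolic decomposition this is the decomposition with respect to $J$.

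I expect the only real difficulty to be bookkeeping rather than anything conceptual: one must keep straight the convention that in $w=vu$ the factor $u$ acts first, and translate the phrase ``same relative order'' into the numerical statement ``$u_1(i)$ equals the rank of $w(i)$ within $w_1$.'' Everything else reduces to the standard identification of $W_J$ with a Young subgroup and of $W^J$ with the increasing-on-blocks (single-descent) permutations, together with the uniqueness of the parabolic factorization quoted in the preliminaries.
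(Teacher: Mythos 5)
Your proof is correct and is essentially the paper's own argument: the paper's one-line proof (``the lemma follows from the fact that $D_R(v)\subseteq\{s_r\}$ and that $s_r\notin S(u)$'') rests on exactly the two membership criteria you verify, namely that $v\in W^J$ means $v$ is increasing on the blocks $[1,r]$ and $[r+1,n]$ and that $u\in W_J$ means $u$ preserves those blocks. The only difference is organizational -- you construct the candidate factors and invoke uniqueness of the parabolic decomposition, while the paper reads the form of $v$ and $u$ off the given decomposition -- and unpacking either version requires the same rank/standardization bookkeeping.
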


\begin{proof}
The lemma follows from the fact that $D_R(v)\subseteq \{s_r\}$ and that $s_r\notin S(u)$.
\end{proof}

\begin{example}
Let $w=541|623$. If $w=vu$ is the parabolic decomposition with respect to $J=S\setminus\{s_3\}$, then $v=145|236$ and $u=321|645$.

$$\begin{tikzpicture}[scale=0.45]
\draw[step=1.0,black] (0,0) grid (5,5);
\fill (0,1) circle (7pt);
\fill (1,2) circle (7pt);
\fill (2,5) circle (7pt);
\fill (3,0) circle (7pt);
\fill (4,4) circle (7pt);
\fill (5,3) circle (7pt);
\draw[dashed,thick, red] (2.5,-1)--(2.5,6);
\draw (2.5,-2) node {$541|623$};
\end{tikzpicture}
\begin{tikzpicture}[scale=0.45]
\draw (0,4.75) node {$=$};
\draw (-1,0) node {};\draw (1,0) node {};
\end{tikzpicture}
\begin{tikzpicture}[scale=0.45]
\draw[step=1.0,black] (0,0) grid (5,5);
\fill (0,5) circle (7pt);
\fill (1,2) circle (7pt);
\fill (2,1) circle (7pt);
\fill (3,4) circle (7pt);
\fill (4,3) circle (7pt);
\fill (5,0) circle (7pt);
\draw[dashed,thick, red] (2.5,-1)--(2.5,6);
\draw (2.5,-2) node {$145|236$};
\end{tikzpicture}
\begin{tikzpicture}[scale=0.45]
\draw (0,4.75) node {$\cdot$};
\draw (-1,0) node {};\draw (1,0) node {};
\end{tikzpicture}
\begin{tikzpicture}[scale=0.45]
\draw[step=1.0,black] (0,0) grid (5,5);
\fill (0,3) circle (7pt);
\fill (1,4) circle (7pt);
\fill (2,5) circle (7pt);
\fill (3,0) circle (7pt);
\fill (4,2) circle (7pt);
\fill (5,1) circle (7pt);
\draw[dashed,thick, red] (2.5,-1)--(2.5,6);
\draw (2.5,-2) node {$321|645$};
\end{tikzpicture}$$
\end{example}

In this case, when $J=S\setminus\{s_r\}$, each $v\in W^J$ corresponds to a unique Schubert variety in the Grassmannian $\Gr(r,n)$.  In particular, define the Schubert variety
$$X^J_v:=\{V\in \Gr(r,n)\ |\ \dim(V\cap E_j)\geq r_v[i,j]\}.$$  Geometrically, restricting $\pi_r$ to $X_w$ gives the projection
$$\pi_r:X_w\twoheadrightarrow X^J_v$$
where the generic fiber is isomorphic to the Schubert variety $X_u$.  We now give a combinatorial characterization for when $\pi_r$ is a fiber bundle.

\begin{proposition}\label{P:BP_def}\emph{(\cite[Theorem 3.3, Proposition 4.2]{RS14})}
Let $w\in W$ and $r<n$.  Let $w=vu$ be the parabolic decomposition with respect to $J=S(w)\setminus\{s_r\}$.  Then the following are equivalent.

\begin{enumerate}
\item $w=vu$ is a \textbf{BP decomposition} with respect to $J$.
\item $S(v)\cap J\subseteq D_L(u)$.
\item The projection $\pi_r:X_w\twoheadrightarrow X^J_v$ is a Zariski-locally trivial fiber bundle.
\end{enumerate}
\end{proposition}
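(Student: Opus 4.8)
The plan is to prove the cycle $(3)\Rightarrow(1)\Rightarrow(2)\Rightarrow(3)$, reading condition (1) concretely as the Poincar\'e polynomial identity
\[
P_w(q)=P^J_v(q)\cdot P_u(q),\qquad P_w(q):=\sum_{x\leq w}q^{\ell(x)},\quad P^J_v(q):=\sum_{\substack{v'\leq v\\ v'\in W^J}}q^{\ell(v')},
\]
and $P_u(q):=\sum_{u'\leq u}q^{\ell(u')}$. Throughout I use that $X_w$, the Grassmannian Schubert variety $X^J_v$, and the fiber Schubert variety $X_u$ each carry an affine paving by Bruhat cells, so that these three polynomials are simultaneously rank-generating functions of Bruhat intervals, cohomological Poincar\'e polynomials, and point-counts over a finite field $\mbF_q$.

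The implication $(3)\Rightarrow(1)$ is the soft direction. If $\pi_r$ restricts to a Zariski-locally trivial bundle, then every fiber is isomorphic to the fiber $X_w\cap\pi_r^{-1}(E_r)=X_u$ over the base point $E_r\in X^J_v$; stratifying $X^J_v$ into opens over which $\pi_r$ is trivial and counting $\mbF_q$-points gives $\#X_w(\mbF_q)=\#X^J_v(\mbF_q)\cdot\#X_u(\mbF_q)$, and passing to the associated polynomials yields (1). For $(1)\Rightarrow(2)$ I would run the Billey--Postnikov analysis of the parabolic decomposition. Since every $x\leq w$ factors uniquely as $x=x^Jx_J$ with $\ell(x)=\ell(x^J)+\ell(x_J)$ and $x^J\leq v$, one has
\[
P_w(q)=\sum_{\substack{v'\leq v\\ v'\in W^J}}q^{\ell(v')}\,F_{v'}(q),\qquad F_{v'}(q):=\sum_{\substack{u'\in W_J\\ v'u'\leq w}}q^{\ell(u')}.
\]
Concatenating reduced words shows $v'u'\leq vu$ whenever $v'\leq v$ and $u'\leq u$, so $F_{v'}\geq P_u$ coefficientwise, with equality when $v'=v$. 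Comparing with the displayed factorization, (1) forces $F_{v'}=P_u$ for every $v'\leq v$, i.e.\ the fiber $F_{v'}$ is independent of $v'$; and the content of Billey--Postnikov is that this uniform-fiber condition holds exactly when $S(v)\cap J\subseteq D_L(u)$. The obstruction is transparent: a reflection $s\in S(v)\cap J$ with $s\notin D_L(u)$ lets one lower $v$ to some $v'$ while pushing an $s$ into the $W_J$-factor, producing an element $v'u'\leq w$ with $u'\not\leq u$ and hence $F_{v'}\neq P_u$.

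The implication $(2)\Rightarrow(3)$ is where the real work lies, and I expect it to be the main obstacle. Write $G=\GL_n(\K)$, let $B$ be the Borel of upper-triangular matrices, and let $P\supseteq B$ be the maximal parabolic with $G/P=\Gr(r,n)$, so that $\pi_r$ is the homogeneous fibration $G/B\to G/P$ with fiber $P/B\cong\Fl(r)\times\Fl(n-r)$ and $X_w\cap\pi_r^{-1}(E_r)=X_u$. I would trivialize over opposite big cells: for each $T$-fixed point $v'\leq v$ of $X^J_v$ let $\Omega_{v'}\subseteq G/P$ be the opposite cell based at $v'$; these are open, they cover $X^J_v$, and the unipotent radical of the opposite parabolic splits $\pi_r$ over each, giving $\pi_r^{-1}(\Omega_{v'})\cong\Omega_{v'}\times(P/B)$. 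The crux is to show that condition (2) forces $X_w\cap\pi_r^{-1}(\Omega_{v'})$ to equal $(\Omega_{v'}\cap X^J_v)\times X_u$ inside this product --- that is, that the fiber of $\pi_r|_{X_w}$ does not jump as $v'$ moves to the boundary strata of $X^J_v$ --- and that the identification is an isomorphism of varieties.

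This constancy of fibers is the technical heart. Since $X_w$ is $B$-stable but not stable under the opposite unipotent radical used to build $\Omega_{v'}$, the product decomposition of the total space does not obviously restrict to $X_w$, and a priori the fiber over a special point $v'$ could be strictly larger than $X_u$ --- exactly the phenomenon measured by $F_{v'}>P_u$ in the combinatorial step. I would show that (2) rules this out by analyzing the defining rank conditions $\dim(E_i\cap V_j)\geq r_w[i,j]$ of $X_w$ under the trivialization: those with $j=r$ cut out $\Omega_{v'}\cap X^J_v$ in the base, while the remaining conditions must decouple into the rank conditions defining $X_u$ in the fiber. Condition (2) is precisely what guarantees this decoupling, since a reflection in $S(v)\cap J\setminus D_L(u)$ would couple a base coordinate to the fiber and shrink the fiber over a boundary stratum. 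Verifying this algebraically, rather than merely numerically via Poincar\'e polynomials, is the step I expect to require the most care.
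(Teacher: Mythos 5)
The statement you are proving is not actually proved in this paper: the authors import it wholesale from \cite[Theorem 3.3, Proposition 4.2]{RS14} and explicitly say they will take (2) or (3) as their \emph{definition} of a BP decomposition. So your attempt must stand on its own, and two of its three implications do (or can be repaired). In $(3)\Rightarrow(1)$ the counting argument is sound, but the fiber you should identify with $X_u$ is the one over the $T$-fixed point $vP$, not over $E_r=eP$: one has $X_w\cap\pi_r^{-1}(vP)=v\cdot X_u$ unconditionally (via $vu'\le vu\Leftrightarrow u'\le u$ for $u'\in W_J$), whereas $X_w\cap\pi_r^{-1}(E_r)$ is the union of the cells $C_x$ over all $x\in W_J$ with $x\le w$, which strictly contains $X_u$ precisely when fibers jump --- e.g.\ for $w=3412$, $r=2$, the fiber over $E_2$ is all of $\Fl(2)\times\Fl(2)$ while $X_u$ is a point. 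As written, your identification assumes the non-jumping you are trying to prove; under hypothesis (3) it is repairable. In $(1)\Rightarrow(2)$, the appeal to ``the content of Billey--Postnikov'' is circular (that equivalence is the proposition), but the obstruction you sketch does work and needs only two lines: if $s\in S(v)\cap J\setminus D_L(u)$, then $su\in W_J$, $\ell(su)=\ell(u)+1$ so $su\not\le u$, and $su\le vu$ because $s$ occurs in a reduced word of $v$, so the single letter $s$ followed by a reduced word of $u$ is a reduced subword of a reduced word of $vu$; hence $F_e\neq P_u$. (Note the cruder witness $u'=s$ fails, since $s\notin D_L(u)$ does not force $s\not\le u$: for $w=3412$, $J=\{s_1,s_2\}$ one has $v=1342$, $u=2314$, $s_2\in S(v)\cap J\setminus D_L(u)$, yet $s_2\le u$.)

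The genuine gap is $(2)\Rightarrow(3)$, and you flag it yourself. After setting up the trivialization of $G/B\to G/P$ over opposite cells $\Omega_{v'}$, you state that the crux is to show condition (2) forces $X_w\cap\pi_r^{-1}(\Omega_{v'})=(\Omega_{v'}\cap X^J_v)\times X_u$, and then defer it as ``the step I expect to require the most care.'' That step \emph{is} the theorem: it is exactly the local-triviality content of \cite[Theorem 3.3]{RS14}, and nothing in your write-up establishes it --- neither the decoupling of the rank conditions nor the isomorphism of the resulting identification. (One workable route, roughly the one in \cite{RS14}, is to use that $X_w$ is stable under left multiplication by the parabolic generated by $D_L(w)$, and to show that condition (2) makes this parabolic act transitively enough on $X^J_v$ to translate the trivialization over the open cell $BvP/P$ across all of $X^J_v$; but this has to be carried out.) As the proposal stands you have proved only $(3)\Rightarrow(1)\Rightarrow(2)$: there is no implication into (3), hence no equivalence.
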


The equivalencies in Proposition \ref{P:BP_def} are proved in \cite{RS14} and for this paper, we will take either parts (2) or (3) of Proposition \ref{P:BP_def} as the definition of BP decomposition (note that this definition corresponds to a ``Grassmannian BP decomposition" in \cite{RS14}).  The goal of Theorem \ref{T:main} is to give a pattern avoidance criteria on the permutation $w$ for any of these equivalent conditions.

Finally, we say $w$ has a \textbf{complete BP decomposition} if we can write
$$w=v_{k}\cdots v_1$$
where for every $i\in[k-1]$, we have $|S(v_i\cdots v_1)|=i$ and $v_{i}(v_{i-1}\cdots v_1)$ is a BP decomposition with respect to $S\setminus\{s_{r_i}\}$ where $s_{r_i}$ is the unique simple generator in $S(v_{i})\setminus S(v_{i-1}\cdots v_1)$.

Observe that the maps $\pi_r=\pi^{[n-1]}_{\{r\}}$ are not of the form $\pi^{\sigma_{i}}_{\sigma_{i-1}}$ used in Definition \ref{D:parabolic bundle}.  The next proposition gives the connection between BP decompositions and complete parabolic bundle structures on Schubert varieties.  The proposition follows directly from \cite[Lemma 4.3]{RS14} and the proof of \cite[Corollary 3.7]{RS14}.

\begin{proposition}\label{P:BP_complete}\emph{(\cite[Lemma 4.3, Corollary 3.7]{RS14})}
The permutation $w$ has a complete BP decomposition if and only if $X_w$ has a complete parabolic bundle structure.
\end{proposition}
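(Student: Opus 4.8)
The plan is to establish both directions by carefully unwinding the two definitions and matching them term by term, using the cited results \cite[Lemma 4.3, Corollary 3.7]{RS14} as the technical engine. The central observation is that a complete parabolic bundle structure is recorded by a permutation $\sigma \in \mfS_{n-1}$ whose associated flag of subsets $\sigma_1 \subset \cdots \subset \sigma_{n-1} = [n-1]$ prescribes the order in which simple-reflection indices are adjoined, while a complete BP decomposition $w = v_k \cdots v_1$ records essentially the same data through the nested supports $S(v_1) \subset S(v_2 v_1) \subset \cdots \subset S(w)$. First I would make this dictionary precise: given a complete BP decomposition, the condition $|S(v_i \cdots v_1)| = i$ forces exactly one new simple generator $s_{r_i}$ to enter the support at each stage, and reading off the indices $r_1, r_2, \ldots$ produces (after accounting for indices never appearing in $S(w)$) a permutation $\sigma \in \mfS_{n-1}$ with $\sigma_i \cap S(w) = S(v_i \cdots v_1)$ in the natural identification of simple generators with $[n-1]$.

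The forward direction assumes $w$ has a complete BP decomposition $w = v_k \cdots v_1$ and I would build the parabolic bundle structure inductively. At each stage the decomposition $(v_i)(v_{i-1} \cdots v_1)$ is a BP decomposition with respect to $J = S \setminus \{s_{r_i}\}$, so by part (3) of Proposition \ref{P:BP_def} the single-step projection $\pi_{r_i}$ restricts to a Zariski-locally trivial fiber bundle on the corresponding Schubert variety. The task is to assemble these single-index projections into the maps $\pi^{\sigma_i}_{\sigma_{i-1}}$ of Definition \ref{D:parabolic bundle}, which forget one index at a time along the tower in Equation \ref{Eq:Flag_bundle_map_Schubert}; this is exactly the content of \cite[Lemma 4.3]{RS14}, which relates BP decompositions with respect to $S \setminus \{s_r\}$ to projections of Schubert varieties onto partial flag varieties, and the construction in the proof of \cite[Corollary 3.7]{RS14}. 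I would invoke these to conclude that the images $X_i = \pi^{[n-1]}_{\sigma_i}(X_w)$ are themselves Schubert varieties in $\Fl(\sigma_i, n)$ and that each map $\pi^{\sigma_i}_{\sigma_{i-1}} : X_i \twoheadrightarrow X_{i-1}$ is a fiber bundle, giving the complete parabolic bundle structure.

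For the converse, suppose $X_w$ has a complete parabolic bundle structure for some $\sigma \in \mfS_{n-1}$. Reading the tower of Equation \ref{Eq:Flag_bundle_map_Schubert} from the bottom up, I would use \cite[Lemma 4.3]{RS14} in the reverse direction to interpret each fiber-bundle map $\pi^{\sigma_i}_{\sigma_{i-1}}$ as certifying a BP decomposition at the index $r_i$ where $\sigma_i$ and $\sigma_{i-1}$ differ, and then read off factors $v_i$ of $w$ whose partial products $v_i \cdots v_1$ have support exactly $\sigma_i \cap S(w)$. The main obstacle I anticipate is bookkeeping the indices that lie outside $S(w)$: the permutation $\sigma$ ranges over all of $[n-1]$, but only the indices in $S(w)$ contribute genuine factors to the BP decomposition, so the tower may contain ``trivial'' steps where the projection is an isomorphism and no new generator enters the support. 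I would handle this by showing that such steps correspond precisely to factors $v_i = \Id$ (equivalently, to the parabolic decomposition being trivial because $s_{r_i} \notin S(w)$), so they can be suppressed without affecting either structure; once this alignment is verified, the equivalence of the two stage-by-stage conditions is immediate and the proposition follows by the induction already set up.
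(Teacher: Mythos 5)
Your overall strategy---reducing everything to \cite[Lemma 4.3]{RS14} and the proof of \cite[Corollary 3.7]{RS14}, with Proposition~\ref{P:BP_def}(3) supplying the single-step bundles and a separate argument disposing of the indices outside $S(w)$---is the same as the paper's, which treats the proposition as following directly from those two results. However, your dictionary between the two structures is oriented backwards, and this is a step that fails rather than a cosmetic slip. You claim $\sigma_i\cap S(w)=S(v_i\cdots v_1)$, i.e.\ that $\sigma$ lists the new generators in the order $r_1,r_2,\ldots,r_k$, innermost factor first. The correct correspondence is the reverse: the bottom of the tower \eqref{Eq:Flag_bundle_map_Schubert} is the Grassmannian image $X_1\subseteq \Gr(\sigma(1),n)$, and the single-index projection of $X_w$ that the given complete BP decomposition certifies as a fiber bundle (via Proposition~\ref{P:BP_def}) is $\pi_{r_k}$, the one attached to the \emph{outermost} factor $v_k$, with fiber $X_{v_{k-1}\cdots v_1}$. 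So one must take $\sigma(i)=r_{k-i+1}$, equivalently $\sigma_i\cap S(w)=S(w)\setminus S(v_{k-i}\cdots v_1)$; this is exactly the correspondence the paper records after the proposition (up to an evident typo there, $r_{k-i-1}$ for $r_{k-i+1}$).

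To see that the orientation matters, take $w=4231=(s_1s_3s_2)(s_1)(s_3)$, so $r_1=3$, $r_2=1$, $r_3=2$. Your recipe produces $\sigma=312$, whereas the valid choice is $\sigma=213$ (Example~\ref{Ex:2}). With $\sigma=312$ the top map of the tower forgets $V_2$: its fiber over a partial flag $(V_1\subset V_3)$ is $\{V_2 \ |\ V_1\subset V_2\subset V_3,\ \dim(V_2\cap E_2)\geq 1\}$, which is a single point when $V_1\not\subset E_2$ and $\dim(V_3\cap E_2)=1$, but is $\mbP^1$ when $V_1\subset E_2$; the fibers jump, the map is not a fiber bundle, and your induction cannot even start. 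The repair is simply to reverse the reading order (peel off $v_k$ first and let it furnish the bottom Grassmannian), after which the assembly through \cite[Lemma 4.3]{RS14} and \cite[Corollary 3.7]{RS14} goes through in both directions; but as written, the central identification on which your argument rests is false. A smaller separate point: the trivial steps of the tower do not correspond to factors $v_i=\Id$---a complete BP decomposition has exactly $k=|S(w)|$ factors, each contributing a new generator---rather, the indices $j\notin S(w)$ never occur as any $r_i$, and the corresponding forgetful maps are isomorphisms because every flag in $X_w$ (hence in each image $X_i$) has $V_j=E_j$.
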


The correspondence in Proposition \ref{P:BP_complete} can be made explicit.  Indeed, let $w=v_k\cdots v_1$ be a complete BP decomposition and let  $s_{r_i}$ denote the unique simple generator in $S(v_{i})\setminus S(v_{i-1}\cdots v_1)$.  Then any permutation $\sigma\in \mfS_{n-1}$ satisfying $\sigma(i)=r_{k-i-1}$ for $i\in[k]$ induces a complete parabolic bundle structure on $X_w$.

\begin{example}
Let $w=4231\in W$.  Then $w=(s_1s_3s_2)(s_1)(s_3)$ is a complete BP decomposition and $\sigma=213$ induces a complete parabolic bundle structure on $X_w$.  See Example \ref{Ex:2} for more details.
\end{example}

\section{Proof of Main theorems}

In this section we prove Theorems \ref{T:main} and \ref{T:main2}.  We begin with some important lemmas on permutations.

\begin{lemma}\label{L:support_v}
Let $v=v(1)\cdots v(n)\in W^J$ where $J=S\setminus\{s_r\}$.  Then $$S(v)=\{s_k\in S \ |\ v(r+1)\leq k< v(r)\}.$$
\end{lemma}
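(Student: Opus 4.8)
The plan is to reduce the computation of $S(v)$ to understanding which initial segments $[k]=\{1,\dots,k\}$ the permutation $v$ stabilizes. Since $v\in W^J$ with $J=S\setminus\{s_r\}$, the characterization $v\in W^J\iff D_R(v)\cap J=\emptyset$ recalled in the preliminaries gives $D_R(v)\subseteq\{s_r\}$; equivalently (as already recorded in Lemma \ref{L:Parabolic perm_structure}) one has $v(1)<\dots<v(r)$ and $v(r+1)<\dots<v(n)$. In particular $v(r)=\max\{v(1),\dots,v(r)\}$ and $v(r+1)=\min\{v(r+1),\dots,v(n)\}$, and the two increasing runs partition $[n]$ into the value sets $A=\{v(1),\dots,v(r)\}$ and $B=\{v(r+1),\dots,v(n)\}$.

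First I would establish the support criterion $s_k\in S(v)\iff v([k])\neq[k]$. This follows from the fact quoted in the preliminaries that $u\in W_{J'}\iff S(u)\subseteq J'$: taking $J'=S\setminus\{s_k\}$ and using that the maximal parabolic $W_{S\setminus\{s_k\}}$ is precisely the subgroup of permutations fixing the segment $[k]$ setwise, we obtain $s_k\notin S(v)\iff S(v)\subseteq S\setminus\{s_k\}\iff v\in W_{S\setminus\{s_k\}}\iff v([k])=[k]$.

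It then remains to verify that $v([k])=[k]$ exactly when $k<v(r+1)$ or $k\ge v(r)$, which is a direct check of three ranges of $k$ against the increasing-run structure. If $k<v(r+1)=\min B$, then the values $\{1,\dots,k\}$ all lie in $A$ and are its $k$ smallest entries, so they occupy positions $1,\dots,k$ and $v([k])=[k]$. If $k\ge v(r)=\max A$, then all of $A$ together with exactly the first $k-r$ entries of $B$ are $\le k$, forcing $\{v(1),\dots,v(k)\}=\{1,\dots,k\}$. Finally, for $v(r+1)\le k\le v(r)-1$ I would exhibit a failure of stabilization: when $k\le r$, position $r+1>k$ carries the value $v(r+1)\le k$, whereas when $k>r$, position $r\le k$ carries the value $v(r)>k$; in either case $v([k])\neq[k]$, hence $s_k\in S(v)$. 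The only step needing real care is the support criterion of the second paragraph, where one must correctly identify $W_{S\setminus\{s_k\}}$ with the stabilizer of $[k]$ and invoke the quoted equivalence; the case analysis is then routine.
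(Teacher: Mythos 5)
Your proof is correct, and it takes a genuinely different route from the paper's. Both arguments start from the same observation that $v\in W^J$ forces $D_R(v)\subseteq\{s_r\}$, so that $v$ consists of the two increasing runs $v(1)<\cdots<v(r)$ and $v(r+1)<\cdots<v(n)$. But the paper's proof is a two-sentence sketch from that point on: it asserts that since the only descent occurs at $(v(r),v(r+1))$, the support consists exactly of the simple transpositions ``needed to transpose'' the nodes $(v(r),r)$ and $(v(r+1),r+1)$ --- an appeal to intuition about reduced words rather than a verifiable criterion. You instead derive the clean criterion $s_k\in S(v)\iff v([k])\neq[k]$ from the fact quoted in the preliminaries that $u\in W_{J'}$ if and only if $S(u)\subseteq J'$, combined with the identification of $W_{S\setminus\{s_k\}}$ with the setwise stabilizer of $[k]$ (a standard fact about Young subgroups, worth the one explicit sentence you give it), and then check the three ranges of $k$ directly against the two increasing runs. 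What your approach buys is a complete, self-contained argument using only facts the paper has already stated; what the paper's buys is brevity. Your case analysis is airtight: the three ranges cover every $k$ (including the degenerate case $v=e$, where the middle range is empty and the outer two overlap), and the witnesses you exhibit in the middle range --- the value $v(r+1)\leq k$ sitting at position $r+1>k$ when $k\leq r$, and the value $v(r)>k$ sitting at position $r\leq k$ when $k>r$ --- are exactly what is needed to rule out stabilization of $[k]$.
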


\begin{proof}
Note that $v\in W^J$ has at most one (right) descent occurring only at $(v(r),v(r+1))$.  Hence the simple transpositions in $S(v)$ are exactly those needed to transpose $(v(r),r)$ and $(v(r+1),r+1)$.
\end{proof}

\begin{lemma}\label{L:descent}
Let $u=u(1)\cdots u(n)\in W$.  Then $$D_L(u)=\{s_k\in S\ |\ u^{-1}(k+1)<u^{-1}(k)\}.$$
In other words, $s_k$ is a left descent of $u$ if and only if the node in the $k$-th row is to the right of the node in the $(k+1)$-th row in the permutation array of $u$.
\end{lemma}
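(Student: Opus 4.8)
The plan is to convert the length inequality $\ell(s_k u) < \ell(u)$ into a statement about inversions and then to track precisely which inversions are created or destroyed when passing from $u$ to $s_k u$. First I would recall the standard identification of Coxeter length on $\mfS_n$ with the number of inversions, $\ell(u) = \#\{(i,j) : i<j,\ u(i)>u(j)\}$, so that by definition $s_k\in D_L(u)$ exactly when left multiplication by $s_k$ strictly decreases this count.

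Next I would record the effect of left multiplication in one-line notation: since $(s_k u)(i)=s_k(u(i))$, the permutation $s_k u$ is obtained from $u$ by interchanging the values $k$ and $k+1$ wherever they appear, leaving every other value in place. In the permutation array this is exactly the exchange of the nodes in rows $k$ and $k+1$. Writing $a=u^{-1}(k)$ and $b=u^{-1}(k+1)$ for the positions (columns) of these two nodes, the swap places value $k+1$ in column $a$ and value $k$ in column $b$.

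The crux of the argument, and the step I expect to demand the most care, is to show that this exchange alters the inversion count by exactly $\pm 1$, and only through the pair of values $\{k,k+1\}$ themselves. The key point is that $k$ and $k+1$ are consecutive integers, so no third value $m$ lies strictly between them; hence each such $m$ (at some column $p$) compares the same way in value against both $k$ and $k+1$. Consequently the number of inversions $m$ forms with the two-element set $\{k,k+1\}$ depends only on how many of the columns $a,b$ lie on the opposite side of $p$, and this total is insensitive to which of $a,b$ holds $k$ versus $k+1$. Thus every inversion involving a third value is preserved; I would confirm this cleanly by splitting into the cases $m<k$ and $m>k+1$.

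It then remains only to examine the pair $\{k,k+1\}$ itself. Before the swap it contributes an inversion precisely when $a>b$, i.e. $u^{-1}(k)>u^{-1}(k+1)$, while after the swap it is an inversion precisely when $a<b$; exactly one of these occurs. Hence $\ell(s_k u)=\ell(u)-1$ if and only if $u^{-1}(k)>u^{-1}(k+1)$, equivalently $u^{-1}(k+1)<u^{-1}(k)$, giving the claimed description of $D_L(u)$. The geometric reformulation is then immediate, since ``to the right'' in the array means larger column index, i.e. larger position. As a shortcut one could instead invoke $D_L(u)=D_R(u^{-1})$ together with the elementary fact that right descents are the one-line-notation descents, but carrying out the inversion count directly keeps the array picture transparent and self-contained.
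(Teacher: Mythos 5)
Your proof is correct, but it takes a genuinely different route from the paper's. The paper disposes of the lemma in one line: since $\ell(w)=\ell(w^{-1})$ gives $D_L(u)=D_R(u^{-1})$, and the permutation array of $u^{-1}$ is the transpose of that of $u$, the left descents of $u$ are exactly the usual one-line-notation descents of $u^{-1}$, i.e.\ the $k$ with $u^{-1}(k)>u^{-1}(k+1)$. You mention this reduction only as a closing ``shortcut'' and instead carry out the inversion count from scratch: identifying $\ell$ with the number of inversions, observing that left multiplication by $s_k$ swaps the values $k$ and $k+1$ in one-line notation, and checking that, because these values are consecutive integers, no inversion involving a third value $m$ is created or destroyed (your case split $m<k$ versus $m>k+1$ is exactly the right check), so the length changes by precisely $\pm 1$ according to the relative position of the two nodes. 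That argument is sound and more self-contained: it in effect re-proves the standard fact that descents of a permutation are read off from its one-line notation, rather than citing it, and it keeps the whole computation visible in the permutation-array picture that the paper relies on in the proof of Theorem \ref{T:main}. What it costs is length; what the paper's argument buys is brevity, at the price of quietly importing the transpose observation and the dictionary between right descents and one-line descents.
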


\begin{proof}
The lemma follows from the fact that $D_L(u) = D_R(u^{-1})$ and if $A$ is the permutation array of $u$, then $A^{-1}= A^t$ is the permutation array of $u^{-1}$.
\end{proof}

\begin{example}
Take the permutation used before, $u = 436125$ which corresponds to the permutation array below.  In the first array, we mark the right descents and in the second array, we marks the left descents.
$$\begin{tikzpicture}[scale=0.45]
\draw[step=1.0,black] (0,0) grid (5,5);
\fill (0,2) circle (7pt);
\fill (1,3) circle (7pt);
\fill (2,0) circle (7pt);
\fill (3,5) circle (7pt);
\fill (4,4) circle (7pt);
\fill (5,1) circle (7pt);
\draw[thick, ->, blue] (2.15,.3)--(2.85,4.7);
\draw[thick, ->, blue] (.3,2.2)--(.7,2.8);
\end{tikzpicture}
\begin{tikzpicture}[scale=0.45]
\draw (0,4.75) node {};
\draw (-1,0) node {};\draw (1,0) node {};
\end{tikzpicture}
\begin{tikzpicture}[scale=0.45]
\draw[step=1.0,black] (0,0) grid (5,5);
\fill (0,2) circle (7pt);
\fill (1,3) circle (7pt);
\fill (2,0) circle (7pt);
\fill (3,5) circle (7pt);
\fill (4,4) circle (7pt);
\fill (5,1) circle (7pt);

\draw[thick, ->, red] (3.7,3.8)--(1.3,3.2);
\draw[thick, ->, red] (.7,2.7)--(.3,2.3);
\draw[thick, ->, red] (4.7,.8)--(2.3,.2);
\end{tikzpicture}$$
Hence, $D_R(u) = \{s_1,s_3\}$ and $D_L(u) = \{s_2,s_3,s_5\}$.
\end{example}

In the proofs of Theorem \ref{T:main} and \ref{T:main2}, we will often refer to sub-arrays or rectangular regions of a permutation array.  Let $A$ be the permutation array of $w=w(1)\cdots w(n)$.  We say a region $R$ of $A$ is \textbf{empty} if the interior of $R$ contains no nodes of the form $(w(i),i)$.  We say a region $R$ is \textbf{decreasing} if for every pair $(w(i),i), (w(j),j)$ in $R$, we have $i<j$ implies $w(i)>w(j)$.  Empty regions in a permutation array will be denoted by a shaded background and decreasing regions with be decorated (counter intuitively) with a northeast arrow.  Finally, we say a pair of nodes $(w(i),i), (w(j),j)$ are \textbf{increasing} if $i<j$ and $w(i)<w(j)$.

\begin{proof}[Proof of Theorem \ref{T:main}]
Fix $r<n$ and let $w\in W=\mfS_n$.  Let $w=vu$ be the parabolic decomposition with respect to $J=S\setminus\{s_r\}.$  By Proposition \ref{P:BP_def}, it suffices to prove that $w$ avoids the split patterns $3|12$ and $23|1$ with respect to position $r$ if and only if $S(v)\cap J=S(v)\setminus\{s_r\}\subseteq D_L(u)$.  Let
$$m:=\max\{w(k) \ |\ k\leq r\}\qquad \text{and}\qquad l:=\min\{w(k) \ |\ k>r\}.$$
The nodes $(m,w^{-1}(m))$ and $(l,w^{-1}(l))$ partition the permutation array of $w$ into regions labeled $A-H$ as in Figure \ref{F:split_w}.  By definition of $m$ and $l$, the regions $D$ and $E$ must be empty.  Moreover, Lemma \ref{L:Parabolic perm_structure} part (1) and Lemma \ref{L:support_v} imply that
\begin{equation}\label{Eq:Supp_v}
S(v)=\{s_k \ |\ l\leq k<m\}.
\end{equation}
Similarly, the permutation array of $u$ partitions into regions $A'-H'$ as in Figure \ref{F:split_w}.  By Lemma \ref{L:Parabolic perm_structure} part (2), the nodes in each region labeled $A-H$ maintain the same relative order of those in $A'-H'$ respectively.  In particular, $(r,w^{-1}(m))$ and $(r+1,w^{-1}(l))$ are nodes in the permutation array of $u$.  Furthermore, since regions $D$ and $E$ are empty, the sizes of regions $A$ and $H$ are the same as the size of regions $A'$ and $H'$.

Now suppose $w$ avoids the patterns $3|12$ and $23|1$ with respect to position $r$.  Then regions $B,G$ must be empty and regions $C,F$ must be decreasing in the permutation array of $w$.  Thus regions $B',G'$ are empty and regions $C',F'$ are decreasing in the permutation array of $u$ (See Figure \ref{F:wu_condition}).  Now Lemma \ref{L:descent} and Equation \eqref{Eq:Supp_v} imply that $D_L(u)$ contains $S(v)\setminus\{s_r\}$ and hence $w=vu$ is a BP decomposition.

\begin{figure}[H]
\begin{tikzpicture}[scale=0.45]
\fill[lightgray] (0,0) rectangle (5,2);
\fill[lightgray] (10,10) rectangle (5,8);
\draw[thick] (0,0) -- (0,10) -- (10,10) -- (10,0) -- (0,0);
\draw[thick] (0,2) -- (10,2);
\draw[thick] (0,8) -- (10,8);
\draw[thick] (2,2) -- (2,8);
\draw[thick] (8,2) -- (8,8);
\draw[thick] (5,0) -- (5,10);
\fill (2,2) circle (7pt);
\fill (8,8) circle (7pt);
\draw (2.5,9) node {$A$};\draw (1,5) node {$B$}; \draw (2.5,1) node {$D$};
\draw (3.5,5) node {$C$};\draw (7.5,9) node {$E$};\draw (6.5,5) node {$F$};
\draw (7.5,1) node {$H$};\draw (9,5) node {$G$};
\draw (-1,2) node {$m$};\draw (-1,8) node {$l$};
\draw (5,-1) node {$r$};
\end{tikzpicture}
\qquad\qquad
\begin{tikzpicture}[scale=0.45]
\fill[lightgray] (0,0) rectangle (5,5);
\fill[lightgray] (10,10) rectangle (5,4);
\draw[thick] (0,0) -- (0,10) -- (10,10) -- (10,0) -- (0,0);
\draw[thick] (5,2) -- (10,2);
\draw[thick] (0,8) -- (5,8);
\draw[thick] (0,5) -- (5,5);
\draw[thick] (5,4) -- (10,4);
\draw[thick] (2,5) -- (2,8);
\draw[thick] (8,2) -- (8,4);
\draw[thick] (5,0) -- (5,10);
\fill (2,5) circle (7pt);
\fill (8,4) circle (7pt);
\draw (2.5,9) node {$A'$};\draw (1,6.5) node {$B'$}; \draw (2.5,2.5) node {$D'$};
\draw (3.5,6.5) node {$C'$};\draw (7.5,7) node {$E'$};\draw (6.5,3) node {$F'$};
\draw (7.5,1) node {$H'$};\draw (9,3) node {$G'$};
\draw (11.5,2) node {$m$};\draw (-1,8) node {$l$};
\draw (5,-1) node {$r$};\draw (-1,5) node {$r$};\draw (11.5,4) node {$r+1$};
\end{tikzpicture}
\caption{Permutation arrays of $w$ and $u$ partitioned by nodes at $(m,w^{-1}(m))$ and $(l,w^{-1}(l))$.\label{F:split_w}}
\end{figure}
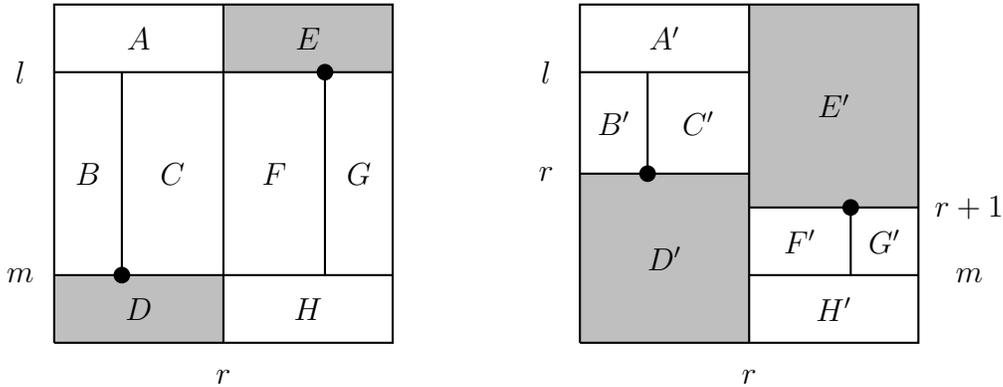

Conversely, suppose $S(v)\setminus\{s_r\}\subseteq D_L(u).$  In particular, Lemma \ref{L:descent} and Equation \eqref{Eq:Supp_v} say that $u^{-1}(k+1)<u^{-1}(k)$ for all $k\in [l,r-1]\sqcup [r+1,m-1]$.  This implies that regions $B',G'$ are empty and regions $C',F'$ are decreasing in the permutation array of $u$.  Hence regions $B,G$ are empty and regions $C,F$ are decreasing in the permutation array of $w$.  Thus $w$ avoids both split patterns $3|12$ and $23|1$ with respect to position $r$.  This completes the proof.

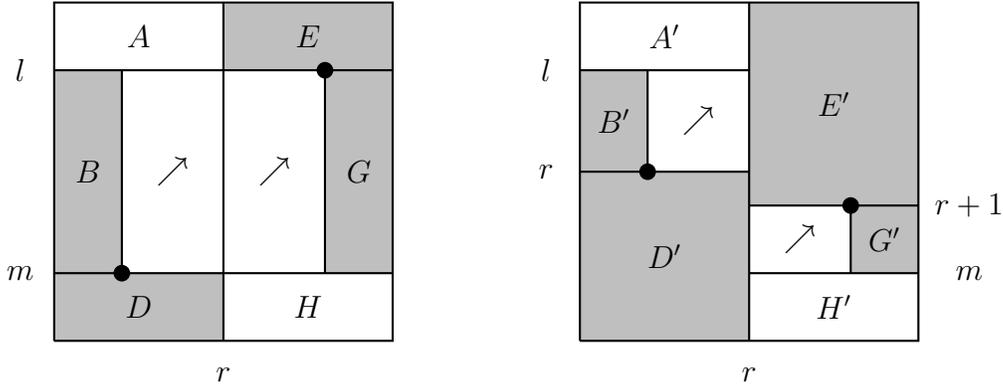
\begin{figure}[H]
\begin{tikzpicture}[scale=0.45]
\fill[lightgray] (0,0) rectangle (2,8);
\fill[lightgray] (0,0) rectangle (5,2);
\fill[lightgray] (10,10) rectangle (5,8);
\fill[lightgray] (10,10) rectangle (8,2);
\draw[thick] (0,0) -- (0,10) -- (10,10) -- (10,0) -- (0,0);
\draw[thick] (0,2) -- (10,2);
\draw[thick] (0,8) -- (10,8);
\draw[thick] (2,2) -- (2,8);
\draw[thick] (8,2) -- (8,8);
\draw[thick] (5,0) -- (5,10);
\fill (2,2) circle (7pt);
\fill (8,8) circle (7pt);
\draw (2.5,9) node {$A$};\draw (1,5) node {$B$}; \draw (2.5,1) node {$D$};
\draw (3.5,5) node {$\nearrow$};\draw (7.5,9) node {$E$};\draw (6.5,5) node {$\nearrow$};
\draw (7.5,1) node {$H$};\draw (9,5) node {$G$};
\draw (-1,2) node {$m$};\draw (-1,8) node {$l$};
\draw (5,-1) node {$r$};
\end{tikzpicture}
\qquad\qquad
\begin{tikzpicture}[scale=0.45]
\fill[lightgray] (0,0) rectangle (2,8);
\fill[lightgray] (0,0) rectangle (5,5);
\fill[lightgray] (10,10) rectangle (5,4);
\fill[lightgray] (10,10) rectangle (8,2);
\draw[thick] (0,0) -- (0,10) -- (10,10) -- (10,0) -- (0,0);
\draw[thick] (5,2) -- (10,2);
\draw[thick] (0,8) -- (5,8);
\draw[thick] (0,5) -- (5,5);
\draw[thick] (5,4) -- (10,4);
\draw[thick] (2,5) -- (2,8);
\draw[thick] (8,2) -- (8,4);
\draw[thick] (5,0) -- (5,10);
\fill (2,5) circle (7pt);
\fill (8,4) circle (7pt);
\draw (2.5,9) node {$A'$};\draw (1,6.5) node {$B'$}; \draw (2.5,2.5) node {$D'$};
\draw (3.5,6.5) node {$\nearrow$};\draw (7.5,7) node {$E'$};\draw (6.5,3) node {$\nearrow$};
\draw (7.5,1) node {$H'$};\draw (9,3) node {$G'$};
\draw (11.5,2) node {$m$};\draw (-1,8) node {$l$};
\draw (5,-1) node {$r$};\draw (-1,5) node {$r$};\draw (11.5,4) node {$r+1$};
\end{tikzpicture}
\caption{Permutation arrays of $w$ and $u$ with $w$ avoiding $3|12$ and $23|1$ with respect to position $r$ or equivalently, $S(v)\setminus\{s_r\}\subseteq D_L(u)$.\label{F:wu_condition}}
\end{figure}

\end{proof}

Before we prove Theorem \ref{T:main2}, we need the following proposition.

\begin{prop}\label{P:exists_BP}
If $w\in W$ avoids $3412$, $52341$ and $635241$, then there exists $r<n$ such that $w$ avoids $3|12$ and $23|1$ with respect to position $r$.  Furthermore, if $S(w)\neq \emptyset$, then we can choose $r$ such that $s_r\in S(w)$.
\end{prop}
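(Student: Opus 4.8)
The plan is to prove the contrapositive: if for \emph{every} position $r<n$ the permutation $w$ contains at least one of the split patterns $3|12$ or $23|1$ with respect to $r$, then $w$ must contain one of the classical patterns $3412$, $52341$, or $635241$. Equivalently, assuming $w$ avoids all three classical patterns, I will produce a single position $r$ at which both split patterns are simultaneously avoided. The natural candidate for $r$ is one determined by the descent structure of $w$. Recall from the proof of Theorem \ref{T:main} that avoiding $3|12$ and $23|1$ with respect to $r$ is equivalent to the geometric condition that, with $m:=\max\{w(k)\mid k\leq r\}$ and $l:=\min\{w(k)\mid k>r\}$, the regions $B,G$ are empty and $C,F$ are decreasing (Figure \ref{F:wu_condition}). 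So the first step is to reinterpret split-pattern containment at $r$ purely in terms of these four regions: a $3|12$ occurrence forces either a non-empty $B$ or a non-decreasing $C$ (a ``rise'' strictly left of and below $m$), and a $23|1$ occurrence forces either a non-empty $G$ or a non-decreasing $F$.

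First I would fix a canonical choice of $r$. A promising choice is to take $r$ to be a position realizing a descent of $w$, or more precisely to choose $s_r\in S(w)$ (which handles the ``furthermore'' clause automatically) by selecting $r$ associated to the smallest, or leftmost, relevant descent so that the regions below/left are as controlled as possible. Concretely, I would try letting $r$ be the position of the \emph{leftmost} descent, or the position minimizing $m-l$; one of these extremal choices should force the bad configurations away. The key step is then the combinatorial case analysis: suppose this chosen $r$ still admits a split-pattern occurrence. Then one of the four forbidden configurations above is present, giving explicit nodes. I would then combine these nodes with the defining nodes $(m,w^{-1}(m))$ and $(l,w^{-1}(l))$ and with the descent data that forced our choice of $r$, and argue that together they realize one of $3412$, $52341$, or $635241$. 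The larger patterns $52341$ and $635241$ are exactly what one expects to arise when a single violating node interacts with the values $m$ and $l$ across the divide, producing a configuration longer than four entries.

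The main obstacle will be making the right extremal choice of $r$ so that the case analysis closes: for a poorly chosen $r$ one can certainly have a split pattern without any of the three classical patterns, so the argument genuinely uses the optimality of the chosen $r$ to manufacture the extra entries needed to complete $52341$ or $635241$. Concretely, the hard part is showing that if every position simultaneously fails, then the \emph{witnessing configurations at different positions chain together}: a violation at the chosen $r$ supplies a short sub-pattern, and the failure at a neighboring position (forced by minimality/leftmost-ness) supplies the additional entries that upgrade $3412$ to one of the length-five or length-six patterns. I expect the proof to be organized as a finite check: list the possible shapes of a minimal violating configuration relative to the rectangular regions $A$–$H$, and in each shape exhibit the embedded classical pattern explicitly via its node coordinates. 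The enumeration of these region configurations, rather than any deep structural input, is where the real work lies.
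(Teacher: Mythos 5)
Your overall framing (assume every position $r$ admits a split-pattern occurrence and derive one of the classical patterns, using the region description of split-pattern avoidance from the proof of Theorem \ref{T:main}) matches the paper's, but the proposal has a genuine gap: the entire content of the proof --- the case analysis --- is deferred (``I would try'', ``should force'', ``I expect''), and the one concrete device you commit to, anchoring the analysis at a position $r$ chosen as the leftmost descent (or any descent) of $w$, is false. Take $w=24153$. This permutation avoids $3412$, $52341$ and $635241$, and its descents are at positions $2$ and $4$; but $w(2)w(3)w(5)=4\,1\,3$ is an occurrence of $3|12$ with respect to $r=2$, and $w(2)w(4)w(5)=4\,5\,3$ is an occurrence of $23|1$ with respect to $r=4$. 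So no descent position works, while the positions that do work are $r=1$ and $r=3$, neither of which is a descent (though both satisfy $s_r\in S(w)$). This also breaks your claim that the ``furthermore'' clause is handled automatically: a good $r$ cannot in general be found among descents, so $s_r\in S(w)$ needs a separate argument. Your alternative choice, minimizing $m-l$, is left entirely unjustified --- its correctness is essentially the proposition itself --- and as stated it can even select positions with $s_r\notin S(w)$ (e.g.\ $r=2$ for $w=213$), which trivially avoid both split patterns but violate the support requirement.

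The paper's proof does not fix a canonical $r$ at all. It anchors at $r=1$, where only $3|12$ can occur, chooses the witnessing nodes $(w(i),i)$, $(w(j),j)$ extremally so that regions $E,F,J$ of Figure \ref{F:Case0} are empty (while $3412$-avoidance makes $D$ empty and $C,I$ decreasing), and then examines the position $r=i$ determined by that violation; the violation at $i$ either yields $52341$ or $635241$ outright, or (when region $I$ is empty) yields a new $3|12$ occurrence with witnesses $i',j'$, and one further step at $r=i'$ closes every case with one of $3412$, $52341$, $635241$. The positions examined are generated dynamically by the witnesses of the previous violations --- this is precisely what supplies the extra entries needed to build the length-five and length-six patterns, and no single extremal position carries the argument. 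Finally, the support claim is proved by a separate induction on $n$: if the good position $r$ has $s_r\notin S(w)$, then $w=w_1|w_2$ splits into two blocks, each block inherits the pattern avoidance by Lemma \ref{L:Parabolic perm_structure}, and a good support position inside a block is also good for $w$. Your proposal would need all three of these ingredients supplied before it becomes a proof.
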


\begin{proof}
We prove the first part of Proposition \ref{P:exists_BP} by contraction.  Suppose for every  position $r<n$, $w$ contains either $3|12$ or $23|1$.  In particular, $w$ contains $3|12$ with respect to position $r=1$.  Any $w(1)w(i)w(j)$ in relative position $3|12$ partitions the permutation array of $w$ into regions labelled $A-K$ as in Figure \ref{F:Case0}.  Moreover, we can choose nodes $(w(i),i),(w(j),j)$ such that regions $E,F, J$ are empty.  Since $w$ avoids 3412, region $D$ must also be empty and regions $C$ and $I$ must be decreasing.

\begin{figure}[h]
\begin{tikzpicture}[scale=0.45]
\draw[thick] (0,0) -- (0,10) -- (10,10) -- (10,0) -- (0,0);
\draw[thick] (0,1) -- (10,1);
\draw[thick] (0,4) -- (7,4);
\draw[thick] (0,7) -- (10,7);
\draw[thick] (0,0) -- (0,10);
\draw[thick] (3,0) -- (3,10);
\draw[thick] (7,0) -- (7,10);
\fill (0,1) circle (7pt);
\fill (3,7) circle (7pt);
\fill (7,4) circle (7pt);
\draw (1.5,8.5) node {$A$};\draw (1.5,5.5) node {$B$}; \draw (1.5,2.5) node {$C$};\draw (1.5,0.5) node {$D$};
\draw (5,8.5) node {$E$};\draw (5,5.5) node {$F$}; \draw (5,2.5) node {$G$};\draw (5,0.5) node {$H$};
\draw (8.5,8.5) node {$I$};\draw (8.5,4) node {$J$}; \draw (8.5,0.5) node {$K$};
\draw (0,-1) node {$1$};\draw (3,-1) node {$i$}; \draw (7,-1) node {$j$};
\draw (-1.5,1) node {$w(1)$};\draw (-1.5,7) node {$w(i)$}; \draw (-1.5,4) node {$w(j)$};
\end{tikzpicture}
\begin{tikzpicture}[scale=0.45]
\draw[thick,->](0,6)--(1,6);
\draw (-1,0) node {};\draw (2,0) node {};
\end{tikzpicture}
\begin{tikzpicture}[scale=0.45]
\fill[lightgray] (0,0) rectangle (3,1);
\fill[lightgray] (3,4) rectangle (7,10);
\fill[lightgray] (7,1) rectangle (10,7);
\draw[thick] (0,1) -- (10,1);
\draw[thick] (0,4) -- (7,4);
\draw[thick] (0,7) -- (10,7);
\draw[thick] (0,0) -- (0,10);
\draw[thick] (3,0) -- (3,10);
\draw[thick] (7,0) -- (7,10);
\draw[thick] (0,0) -- (0,10) -- (10,10) -- (10,0) -- (0,0);
\fill (0,1) circle (7pt);
\fill (3,7) circle (7pt);
\fill (7,4) circle (7pt);
\draw (1.5,8.5) node {$A$};\draw (1.5,5.5) node {$B$}; \draw (1.5,2.5) node {$\nearrow$};\draw (1.5,0.5) node {$D$};
\draw (5,8.5) node {$E$};\draw (5,5.5) node {$F$}; \draw (5,2.5) node {$G$};\draw (5,0.5) node {$H$};
\draw (8.5,8.5) node {$\nearrow$};\draw (8.5,4) node {$J$}; \draw (8.5,0.5) node {$K$};
\draw (0,-1) node {$1$};\draw (3,-1) node {$i$}; \draw (7,-1) node {$j$};
\draw (-1.5,1) node {$w(1)$};\draw (-1.5,7) node {$w(i)$}; \draw (-1.5,4) node {$w(j)$};
\end{tikzpicture}
\caption{Permutation array of $w$ containing $3|12$ with respect to position $r=1$.\label{F:Case0}}
\end{figure}
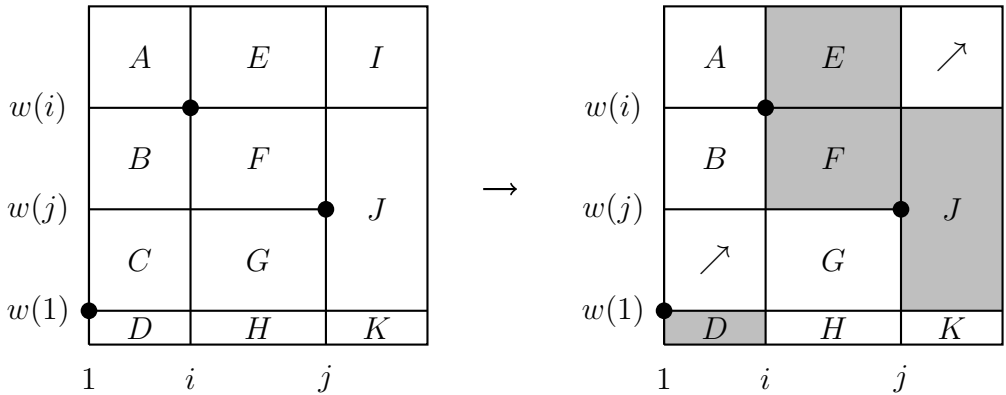

Now $w$ contains either pattern $3|12$ or $23|1$ with respect to position $r=i$.  We consider several cases depending on if region $I$ is empty or nonempty and if $w$ contains $3|12$ or $23|1$ with respect to position $i$.

\textbf{Case 1:}  Suppose the region $I$ is nonempty and $w$ contains $3|12$ with respect to position $i$.  Since regions $D,E,F$ and $J$ are empty and $I$ is decreasing, the permutation array of $w$ must contain two increasing nodes in region $G$ as in Figure \ref{F:Case1}.  This implies $w$ contains the pattern $52341$ which is a contradiction.

\medskip

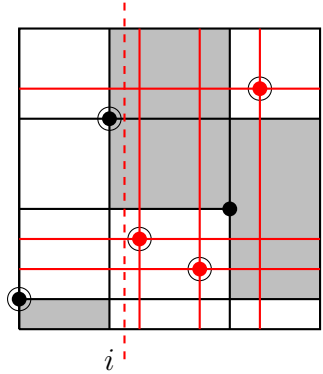
\begin{figure}[h]
\begin{tikzpicture}[scale=0.4]
\fill[lightgray] (0,0) rectangle (3,1);
\fill[lightgray] (3,4) rectangle (7,10);
\fill[lightgray] (7,1) rectangle (10,7);
\draw[thick] (0,1) -- (10,1);
\draw[thick] (0,4) -- (7,4);
\draw[thick] (0,7) -- (10,7);
\draw[thick] (0,0) -- (0,10);
\draw[thick] (3,0) -- (3,10);
\draw[thick] (7,0) -- (7,10);
\draw[thick] (0,0) -- (0,10) -- (10,10) -- (10,0) -- (0,0);
\fill (0,1) circle (7pt); \draw (0,1) circle (11pt);
\fill (3,7) circle (7pt); \draw (3,7) circle (11pt);
\fill (7,4) circle (7pt);
\fill[red] (4,3) circle (7pt);\fill[red] (6,2) circle (7pt);\fill[red] (8,8) circle (7pt);
\draw (4,3) circle (11pt);\draw (6,2) circle (11pt);\draw (8,8) circle (11pt);
\draw[thick,red] (4,0)--(4,10);\draw[thick,red](0,3)--(10,3);
\draw[thick,red] (6,0)--(6,10);\draw[thick,red](0,2)--(10,2);
\draw[thick,red] (8,0)--(8,10);\draw[thick,red](0,8)--(10,8);
\draw (3,-1) node {$i$};
\draw[dashed,thick, red] (3.5,-1)--(3.5,11);
\end{tikzpicture}
\caption{Permutation array of $w$ containing $3|12$ with respect to $r=i$ and region $I$ is nonempty.\label{F:Case1}}
\end{figure}

\textbf{Case 2:} Suppose the region $I$ is nonempty and $w$ contains $23|1$ with respect to position $i$.  If region $A$ has a node belonging to the pattern $23|1$, then $w$ contains the pattern $52341$.  Otherwise, since region $C$ is decreasing, $w$ must contain a pair of increasing nodes in region $B$ or $B\cup C.$  If the nodes are in region $B$, then $w$ contains $52341$ and if the nodes are in region $B\cup C$, then $w$ contains $635241$. See Figure \ref{F:Case2} for an illustration of these three subcases.

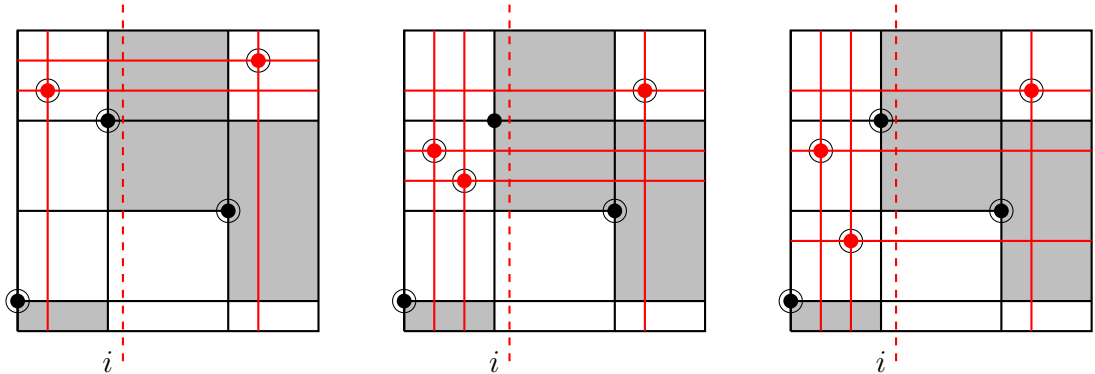
\begin{figure}[h]
\begin{tikzpicture}[scale=0.4]
\fill[lightgray] (0,0) rectangle (3,1);
\fill[lightgray] (3,4) rectangle (7,10);
\fill[lightgray] (7,1) rectangle (10,7);
\draw[thick] (0,1) -- (10,1);
\draw[thick] (0,4) -- (7,4);
\draw[thick] (0,7) -- (10,7);
\draw[thick] (0,0) -- (0,10);
\draw[thick] (3,0) -- (3,10);
\draw[thick] (7,0) -- (7,10);
\fill (0,1) circle (7pt);\draw (0,1) circle (11pt);
\fill (3,7) circle (7pt);\draw (3,7) circle (11pt);
\fill (7,4) circle (7pt);\draw (7,4) circle (11pt);
\draw[thick] (0,0) -- (0,10) -- (10,10) -- (10,0) -- (0,0);
\fill[red] (1,8) circle (7pt);\fill[red] (8,9) circle (7pt);
\draw (1,8) circle (11pt);\draw (8,9) circle (11pt);
\draw[thick,red] (1,0)--(1,10);\draw[thick,red](0,8)--(10,8);
\draw[thick,red] (8,0)--(8,10);\draw[thick,red](0,9)--(10,9);
\draw (3,-1) node {$i$};
\draw[dashed,thick, red] (3.5,-1)--(3.5,11);
\end{tikzpicture}
\qquad
\begin{tikzpicture}[scale=0.4]
\fill[lightgray] (0,0) rectangle (3,1);
\fill[lightgray] (3,4) rectangle (7,10);
\fill[lightgray] (7,1) rectangle (10,7);
\draw[thick] (0,1) -- (10,1);
\draw[thick] (0,4) -- (7,4);
\draw[thick] (0,7) -- (10,7);
\draw[thick] (0,0) -- (0,10);
\draw[thick] (3,0) -- (3,10);
\draw[thick] (7,0) -- (7,10);
\draw[thick] (0,0) -- (0,10) -- (10,10) -- (10,0) -- (0,0);
\fill (0,1) circle (7pt);\draw (0,1) circle (11pt);
\fill (3,7) circle (7pt);
\fill (7,4) circle (7pt);\draw (7,4) circle (11pt);
\fill[red] (1,6) circle (7pt);\fill[red] (2,5) circle (7pt);\fill[red] (8,8) circle (7pt);
\draw (1,6) circle (11pt);\draw (2,5) circle (11pt);\draw (8,8) circle (11pt);
\draw[thick,red] (1,0)--(1,10);\draw[thick,red](0,6)--(10,6);
\draw[thick,red] (2,0)--(2,10);\draw[thick,red](0,5)--(10,5);
\draw[thick,red] (8,0)--(8,10);\draw[thick,red](0,8)--(10,8);
\draw (3,-1) node {$i$};
\draw[dashed,thick, red] (3.5,-1)--(3.5,11);
\end{tikzpicture}
\qquad
\begin{tikzpicture}[scale=0.4]
\fill[lightgray] (0,0) rectangle (3,1);
\fill[lightgray] (3,4) rectangle (7,10);
\fill[lightgray] (7,1) rectangle (10,7);
\draw[thick] (0,1) -- (10,1);
\draw[thick] (0,4) -- (7,4);
\draw[thick] (0,7) -- (10,7);
\draw[thick] (0,0) -- (0,10);
\draw[thick] (3,0) -- (3,10);
\draw[thick] (7,0) -- (7,10);
\draw[thick] (0,0) -- (0,10) -- (10,10) -- (10,0) -- (0,0);
\fill (0,1) circle (7pt);\draw (0,1) circle (11pt);
\fill (3,7) circle (7pt);\draw (3,7) circle (11pt);
\fill (7,4) circle (7pt);\draw (7,4) circle (11pt);
\fill[red] (1,6) circle (7pt);\fill[red] (2,3) circle (7pt);\fill[red] (8,8) circle (7pt);
\draw (1,6) circle (11pt);\draw (2,3) circle (11pt);\draw (8,8) circle (11pt);
\draw[thick,red] (1,0)--(1,10);\draw[thick,red](0,6)--(10,6);
\draw[thick,red] (2,0)--(2,10);\draw[thick,red](0,3)--(10,3);
\draw[thick,red] (8,0)--(8,10);\draw[thick,red](0,8)--(10,8);
\draw (3,-1) node {$i$};
\draw[dashed,thick, red] (3.5,-1)--(3.5,11);
\end{tikzpicture}
\caption{Permutation array of $w$ containing $23|1$ with respect to $r=i$ and region $I$ is nonempty.\label{F:Case2}}
\end{figure}

\textbf{Case 3:} Suppose the region $I$ is empty.  Since region $C$ is decreasing, it is not possible for $w$ to contain $23|1$ with respect to position $i$.  Hence $w$ contains $3|12$ and thus region $G$ must contain a pair of increasing nodes.  These nodes partition region $G\cup H$ into sub-regions labeled $A'-K'$ as in Figure \ref{F:Case3}.  Choose increasing nodes $(w(i'),i')$ and $(w(j'),j')$ in region $G$, so that regions $E',F'$ and $J'$ are empty.  Also, since $w$ avoids 3412 and 52341, we can further assume that regions $A'$ and $D'$ are empty and that regions $C'$ and $I'$ are decreasing.

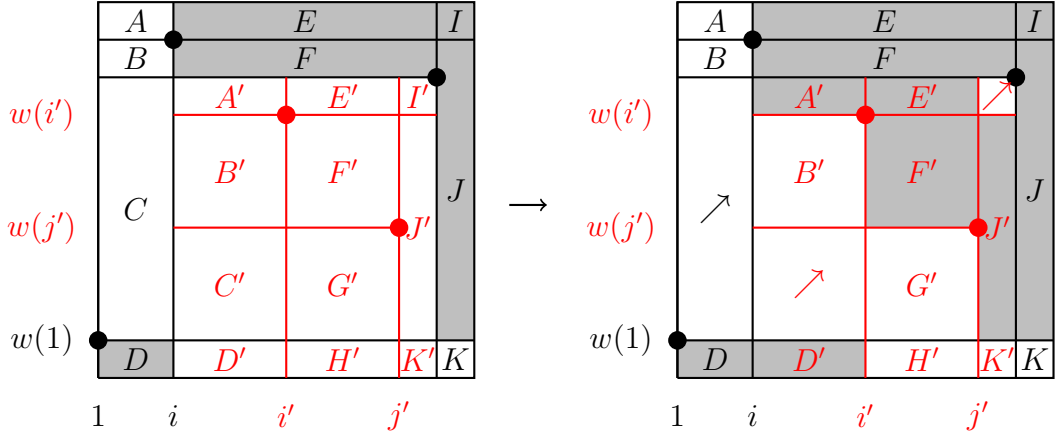
\begin{figure}[h]
$$\begin{tikzpicture}[scale=0.5]
\fill[lightgray] (2,9) rectangle (10,10);
\fill[lightgray] (2,8) rectangle (9,9);
\fill[lightgray] (9,1) rectangle (10,9);
\fill[lightgray] (0,0) rectangle (2,1);
\draw[thick] (0,1) -- (10,1);
\draw[thick] (0,8) -- (9,8);
\draw[thick] (0,9) -- (10,9);
\draw[thick] (0,0) -- (0,10);
\draw[thick] (2,0) -- (2,10);
\draw[thick] (9,0) -- (9,10);
\draw[thick] (0,0) -- (0,10) -- (10,10) -- (10,0) -- (0,0);
\fill (0,1) circle (7pt);
\fill (2,9) circle (7pt);
\fill (9,8) circle (7pt);
\draw (1,9.5) node {$A$};\draw (1,8.5) node {$B$}; \draw (1,4.5) node {$C$};\draw (1,0.5) node {$D$};
\draw (5.5,9.5) node {$E$};\draw (5.5,8.5) node {$F$}; \draw (5.5,4.5) node {};\draw (5.5,0.5) node {};
\draw (9.5,9.5) node {$I$};\draw (9.5,5) node {$J$}; \draw (9.5,0.5) node {$K$};
\fill[red] (5,7) circle (7pt);\fill[red] (8,4) circle (7pt);
\draw[thick,red] (5,0)--(5,8);\draw[thick,red](2,7)--(9,7);
\draw[thick,red] (8,0)--(8,8);\draw[thick,red](2,4)--(8,4);
\draw[red] (3.5,7.5) node {$A'$};\draw[red] (3.5,5.5) node {$B'$}; \draw[red] (3.5,2.5) node {$C'$};\draw[red] (3.5,0.5) node {$D'$};
\draw[red] (6.5,7.5) node {$E'$};\draw[red] (6.5,5.5) node {$F'$}; \draw[red] (6.5,2.5) node {$G'$};\draw[red] (6.5,0.5) node {$H'$};
\draw[red] (8.5,7.5) node {$I'$};\draw[red] (8.5,4) node {$J'$}; \draw[red] (8.5,0.5) node {$K'$};
\draw (0,-1) node {$1$};\draw (2,-1) node {$i$}; \draw[red] (5,-1) node {$i'$}; \draw[red] (8,-1) node {$j'$};
\draw (-1.5,1) node {$w(1)$};\draw[red] (-1.5,7) node {$w(i')$}; \draw[red] (-1.5,4) node {$w(j')$};
\end{tikzpicture}
\begin{tikzpicture}[scale=0.5]
\draw[thick,->](0,6)--(1,6);
\draw (-0.5,0) node {};\draw (1.5,0) node {};
\end{tikzpicture}
\begin{tikzpicture}[scale=0.5]
\fill[lightgray] (2,9) rectangle (10,10);
\fill[lightgray] (2,8) rectangle (9,9);
\fill[lightgray] (2,7) rectangle (5,8);
\fill[lightgray] (9,1) rectangle (10,9);
\fill[lightgray] (0,0) rectangle (5,1);
\fill[lightgray] (8,1) rectangle (9,7);
\fill[lightgray] (5,4) rectangle (8,8);
\draw[thick] (0,1) -- (10,1);
\draw[thick] (0,8) -- (9,8);
\draw[thick] (0,9) -- (10,9);
\draw[thick] (0,0) -- (0,10);
\draw[thick] (2,0) -- (2,10);
\draw[thick] (9,0) -- (9,10);
\draw[thick] (0,0) -- (0,10) -- (10,10) -- (10,0) -- (0,0);
\fill (0,1) circle (7pt);
\fill (2,9) circle (7pt);
\fill (9,8) circle (7pt);
\draw (1,9.5) node {$A$};\draw (1,8.5) node {$B$}; \draw (1,4.5) node {$\nearrow$};\draw (1,0.5) node {$D$};
\draw (5.5,9.5) node {$E$};\draw (5.5,8.5) node {$F$}; \draw (5.5,4.5) node {};\draw (5.5,0.5) node {};
\draw (9.5,9.5) node {$I$};\draw (9.5,5) node {$J$}; \draw (9.5,0.5) node {$K$};
\fill[red] (5,7) circle (7pt);\fill[red] (8,4) circle (7pt);
\draw[thick,red] (5,0)--(5,8);\draw[thick,red](2,7)--(9,7);
\draw[thick,red] (8,0)--(8,8);\draw[thick,red](2,4)--(8,4);
\draw[red] (3.5,7.5) node {$A'$};\draw[red] (3.5,5.5) node {$B'$}; \draw[red] (3.5,2.5) node {$\nearrow$};\draw[red] (3.5,0.5) node {$D'$};
\draw[red] (6.5,7.5) node {$E'$};\draw[red] (6.5,5.5) node {$F'$}; \draw[red] (6.5,2.5) node {$G'$};\draw[red] (6.5,0.5) node {$H'$};
\draw[red] (8.5,7.5) node {$\nearrow$};\draw[red] (8.5,4) node {$J'$}; \draw[red] (8.5,0.5) node {$K'$};
\draw (0,-1) node {$1$};\draw (2,-1) node {$i$}; \draw[red] (5,-1) node {$i'$}; \draw[red] (8,-1) node {$j'$};
\draw (-1.5,1) node {$w(1)$};\draw[red] (-1.5,7) node {$w(i')$}; \draw[red] (-1.5,4) node {$w(j')$};
\end{tikzpicture}$$
\caption{Permutation array of $w$ containing $3|12$ with respect to position $r=i$ and region $I$ is empty.\label{F:Case3}}
\end{figure}

Now $w$ contains $3|12$ or $23|1$ with respect to position $r=i'$. First, if $w$ contains $3|12$, then, since region $I'$ is decreasing, $w$ must have a pair of increasing nodes in region $G'$.  This implies $w$ contains 52341.

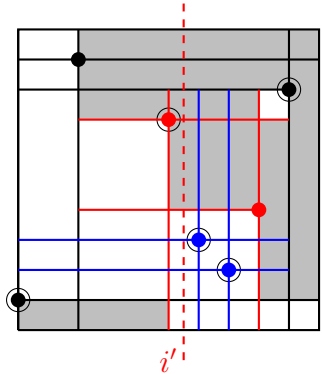
\begin{figure}[h]
\begin{tikzpicture}[scale=0.4]
\fill[lightgray] (2,9) rectangle (10,10);
\fill[lightgray] (2,8) rectangle (9,9);
\fill[lightgray] (9,1) rectangle (10,9);
\fill[lightgray] (0,0) rectangle (5,1);
\fill[lightgray] (8,1) rectangle (9,7);
\fill[lightgray] (5,4) rectangle (8,8);
\fill[lightgray] (2,7) rectangle (5,8);
\draw[thick] (0,1) -- (10,1);
\draw[thick] (0,8) -- (9,8);
\draw[thick] (0,9) -- (10,9);
\draw[thick] (0,0) -- (0,10);
\draw[thick] (2,0) -- (2,10);
\draw[thick] (9,0) -- (9,10);
\draw[thick] (0,0) -- (0,10) -- (10,10) -- (10,0) -- (0,0);
\fill (0,1) circle (7pt);\draw (0,1) circle (11pt);
\fill (2,9) circle (7pt);
\fill (9,8) circle (7pt);\draw (9,8) circle (11pt);
\fill[red] (5,7) circle (7pt);\fill[red] (8,4) circle (7pt);
\draw (5,7) circle (11pt);
\draw[thick,red] (5,0)--(5,8);\draw[thick,red](2,7)--(9,7);
\draw[thick,red] (8,0)--(8,8);\draw[thick,red](2,4)--(8,4);
\fill[blue] (6,3) circle (7pt);\fill[blue] (7,2) circle (7pt);
\draw (6,3) circle (11pt);\draw (7,2) circle (11pt);
\draw[thick,blue] (6,0)--(6,8);\draw[thick,blue](0,3)--(9,3);
\draw[thick,blue] (7,0)--(7,8);\draw[thick,blue](0,2)--(9,2);
\draw[red] (5,-1) node {$i'$};
\draw[dashed,thick, red] (5.5,-1)--(5.5,11);
\end{tikzpicture}
\caption{Permutation array of $w$ containing $3|12$ with respect to position $r=i'$.\label{F:Case3a}}
\end{figure}

If $w$ contains $23|1$, then the fact that regions $C$ and $C'$ are decreasing implies that $w$ has a pair of increasing nodes in either regions $B', B'\cup C', C\cup B'$ or $C\cup C'.$  If $w$ contains increasing nodes in regions $B'$ or $B'\cup C'$, then $w$ contains $52341$ or $635241$ respectively as in Figure \ref{F:Case3b}.

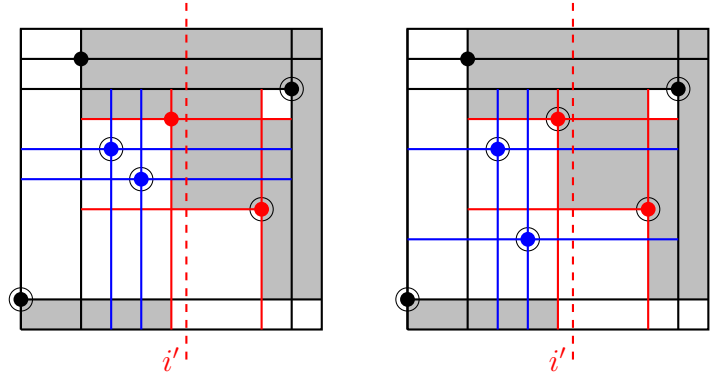
\begin{figure}[h]
\begin{tikzpicture}[scale=0.4]
\fill[lightgray] (2,9) rectangle (10,10);
\fill[lightgray] (2,8) rectangle (9,9);
\fill[lightgray] (9,1) rectangle (10,9);
\fill[lightgray] (0,0) rectangle (5,1);
\fill[lightgray] (8,1) rectangle (9,7);
\fill[lightgray] (5,4) rectangle (8,8);
\fill[lightgray] (2,7) rectangle (5,8);
\draw[thick] (0,1) -- (10,1);
\draw[thick] (0,8) -- (9,8);
\draw[thick] (0,9) -- (10,9);
\draw[thick] (0,0) -- (0,10);
\draw[thick] (2,0) -- (2,10);
\draw[thick] (9,0) -- (9,10);
\draw[thick] (0,0) -- (0,10) -- (10,10) -- (10,0) -- (0,0);
\fill (0,1) circle (7pt);\draw (0,1) circle (11pt);
\fill (2,9) circle (7pt);
\fill (9,8) circle (7pt);\draw (9,8) circle (11pt);
\fill[red] (5,7) circle (7pt);\fill[red] (8,4) circle (7pt);
\draw (8,4) circle (11pt);
\draw[thick,red] (5,0)--(5,8);\draw[thick,red](2,7)--(9,7);
\draw[thick,red] (8,0)--(8,8);\draw[thick,red](2,4)--(8,4);
\fill[blue] (3,6) circle (7pt);\fill[blue] (4,5) circle (7pt);
\draw (3,6) circle (11pt);\draw (4,5) circle (11pt);
\draw[thick,blue] (3,0)--(3,8);\draw[thick,blue](0,6)--(9,6);
\draw[thick,blue] (4,0)--(4,8);\draw[thick,blue](0,5)--(9,5);
\draw[red] (5,-1) node {$i'$};
\draw[dashed,thick, red] (5.5,-1)--(5.5,11);
\end{tikzpicture}
\qquad
\begin{tikzpicture}[scale=0.4]
\fill[lightgray] (2,9) rectangle (10,10);
\fill[lightgray] (2,8) rectangle (9,9);
\fill[lightgray] (9,1) rectangle (10,9);
\fill[lightgray] (0,0) rectangle (5,1);
\fill[lightgray] (8,1) rectangle (9,7);
\fill[lightgray] (5,4) rectangle (8,8);
\fill[lightgray] (2,7) rectangle (5,8);
\draw[thick] (0,1) -- (10,1);
\draw[thick] (0,8) -- (9,8);
\draw[thick] (0,9) -- (10,9);
\draw[thick] (0,0) -- (0,10);
\draw[thick] (2,0) -- (2,10);
\draw[thick] (9,0) -- (9,10);
\fill (0,1) circle (7pt);\draw (0,1) circle (11pt);
\fill (2,9) circle (7pt);
\fill (9,8) circle (7pt);\draw (9,8) circle (11pt);
\draw[thick] (0,0) -- (0,10) -- (10,10) -- (10,0) -- (0,0);
\fill[red] (5,7) circle (7pt);\fill[red] (8,4) circle (7pt);
\draw (5,7) circle (11pt);\draw (8,4) circle (11pt);
\draw[thick,red] (5,0)--(5,8);\draw[thick,red](2,7)--(9,7);
\draw[thick,red] (8,0)--(8,8);\draw[thick,red](2,4)--(8,4);
\fill[blue] (3,6) circle (7pt);\fill[blue] (4,3) circle (7pt);
\draw (3,6) circle (11pt);\draw (4,3) circle (11pt);
\draw[thick,blue] (3,0)--(3,8);\draw[thick,blue](0,6)--(9,6);
\draw[thick,blue] (4,0)--(4,8);\draw[thick,blue](0,3)--(9,3);
\draw[red] (5,-1) node {$i'$};
\draw[dashed,thick, red] (5.5,-1)--(5.5,11);
\end{tikzpicture}
\caption{Permutation array of $w$ containing $23|1$ with respect to position $r=i'$ using regions $B'$ and $B'\cup C'$.\label{F:Case3b}}
\end{figure}

Finally, if $w$ contains increasing nodes in regions $C\cup B'$ or $C\cup C'$, then we have the following three possibilities as in Figure \ref{F:Case3c}.

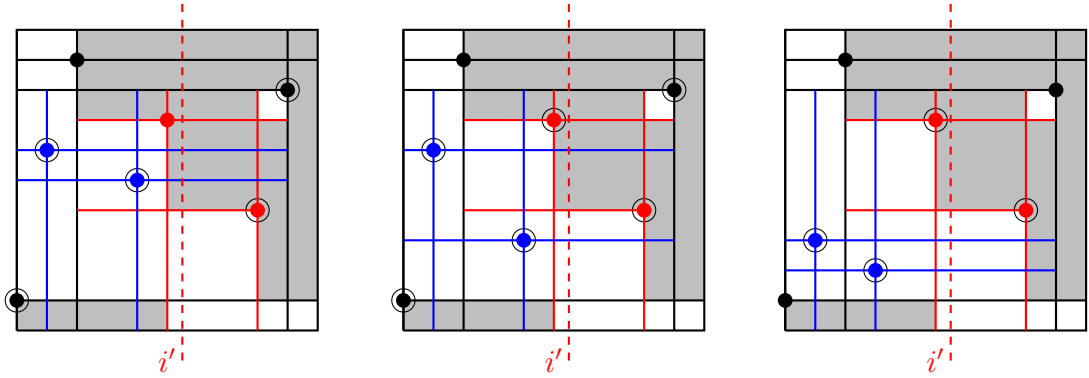
\begin{figure}[h]
\begin{tikzpicture}[scale=0.4]
\fill[lightgray] (2,9) rectangle (10,10);
\fill[lightgray] (2,8) rectangle (9,9);
\fill[lightgray] (9,1) rectangle (10,9);
\fill[lightgray] (0,0) rectangle (5,1);
\fill[lightgray] (8,1) rectangle (9,7);
\fill[lightgray] (5,4) rectangle (8,8);
\fill[lightgray] (2,7) rectangle (5,8);
\draw[thick] (0,1) -- (10,1);
\draw[thick] (0,8) -- (9,8);
\draw[thick] (0,9) -- (10,9);
\draw[thick] (0,0) -- (0,10);
\draw[thick] (2,0) -- (2,10);
\draw[thick] (9,0) -- (9,10);
\draw[thick] (0,0) -- (0,10) -- (10,10) -- (10,0) -- (0,0);
\fill (0,1) circle (7pt);\draw (0,1) circle (11pt);
\fill (2,9) circle (7pt);
\fill (9,8) circle (7pt);\draw (9,8) circle (11pt);
\fill[red] (5,7) circle (7pt);\fill[red] (8,4) circle (7pt);
\draw (8,4) circle (11pt);
\draw[thick,red] (5,0)--(5,8);\draw[thick,red](2,7)--(9,7);
\draw[thick,red] (8,0)--(8,8);\draw[thick,red](2,4)--(8,4);
\fill[blue] (1,6) circle (7pt);\fill[blue] (4,5) circle (7pt);
\draw (1,6) circle (11pt);\draw (4,5) circle (11pt);
\draw[thick,blue] (1,0)--(1,8);\draw[thick,blue](0,6)--(9,6);
\draw[thick,blue] (4,0)--(4,8);\draw[thick,blue](0,5)--(9,5);
\draw[red] (5,-1) node {$i'$};
\draw[dashed,thick, red] (5.5,-1)--(5.5,11);
\end{tikzpicture}
\qquad
\begin{tikzpicture}[scale=0.4]
\fill[lightgray] (2,9) rectangle (10,10);
\fill[lightgray] (2,8) rectangle (9,9);
\fill[lightgray] (9,1) rectangle (10,9);
\fill[lightgray] (0,0) rectangle (5,1);
\fill[lightgray] (8,1) rectangle (9,7);
\fill[lightgray] (5,4) rectangle (8,8);
\fill[lightgray] (2,7) rectangle (5,8);
\draw[thick] (0,1) -- (10,1);
\draw[thick] (0,8) -- (9,8);
\draw[thick] (0,9) -- (10,9);
\draw[thick] (0,0) -- (0,10);
\draw[thick] (2,0) -- (2,10);
\draw[thick] (9,0) -- (9,10);
\draw[thick] (0,0) -- (0,10) -- (10,10) -- (10,0) -- (0,0);
\fill (0,1) circle (7pt);\draw (0,1) circle (11pt);
\fill (2,9) circle (7pt);
\fill (9,8) circle (7pt);\draw (9,8) circle (11pt);
\fill[red] (5,7) circle (7pt);\fill[red] (8,4) circle (7pt);
\draw (5,7) circle (11pt);\draw (8,4) circle (11pt);
\draw[thick,red] (5,0)--(5,8);\draw[thick,red](2,7)--(9,7);
\draw[thick,red] (8,0)--(8,8);\draw[thick,red](2,4)--(8,4);
\fill[blue] (1,6) circle (7pt);\fill[blue] (4,3) circle (7pt);
\draw (1,6) circle (11pt);\draw (4,3) circle (11pt);
\draw[thick,blue] (1,0)--(1,8);\draw[thick,blue](0,6)--(9,6);
\draw[thick,blue] (4,0)--(4,8);\draw[thick,blue](0,3)--(9,3);
\draw[red] (5,-1) node {$i'$};
\draw[dashed,thick, red] (5.5,-1)--(5.5,11);
\end{tikzpicture}
\qquad
\begin{tikzpicture}[scale=0.4]
\fill[lightgray] (2,9) rectangle (10,10);
\fill[lightgray] (2,8) rectangle (9,9);
\fill[lightgray] (9,1) rectangle (10,9);
\fill[lightgray] (0,0) rectangle (5,1);
\fill[lightgray] (8,1) rectangle (9,7);
\fill[lightgray] (5,4) rectangle (8,8);
\fill[lightgray] (2,7) rectangle (5,8);
\draw[thick] (0,1) -- (10,1);
\draw[thick] (0,8) -- (9,8);
\draw[thick] (0,9) -- (10,9);
\draw[thick] (0,0) -- (0,10);
\draw[thick] (2,0) -- (2,10);
\draw[thick] (9,0) -- (9,10);
\draw[thick] (0,0) -- (0,10) -- (10,10) -- (10,0) -- (0,0);
\fill (0,1) circle (7pt);
\fill (2,9) circle (7pt);
\fill (9,8) circle (7pt);
\fill[red] (5,7) circle (7pt);\fill[red] (8,4) circle (7pt);
\draw (5,7) circle (11pt);\draw (8,4) circle (11pt);
\draw[thick,red] (5,0)--(5,8);\draw[thick,red](2,7)--(9,7);
\draw[thick,red] (8,0)--(8,8);\draw[thick,red](2,4)--(8,4);
\fill[blue] (1,3) circle (7pt);\fill[blue] (3,2) circle (7pt);
\draw (1,3) circle (11pt);\draw (3,2) circle (11pt);
\draw[thick,blue] (1,0)--(1,8);\draw[thick,blue](0,3)--(9,3);
\draw[thick,blue] (3,0)--(3,8);\draw[thick,blue](0,2)--(9,2);
\draw[red] (5,-1) node {$i'$};
\draw[dashed,thick, red] (5.5,-1)--(5.5,11);
\end{tikzpicture}
\caption{Permutation array of $w$ containing $23|1$ with respect to position $r=i'$ using regions $C\cup B'$ and $C\cup C'$.\label{F:Case3c}}
\end{figure}

We can see that $w$ contains 52341, 635241 and 3412 respectively for each of these possibilities.  This completes the first part of the proof.

For the second part, we show that position $r$ can be chosen so that $s_r\in S(w)$.  We proceed by induction on the size of the permutation $n$.  First note that if $n=2$, then the Proposition is true for $w=s_1$ and $r=1$.  Now if $w\in W=\mfS_n$ avoids the patterns $3412$, $52341$ and $635241$, then there exists $r<n$ where the parabolic decomposition $w=vu$ with respect to $J=S\setminus\{s_r\}$ is a BP decomposition.  If $s_r\in S(w)$, then we are done.  Otherwise, $s_r\notin S(w)$ which implies $w=u$.  Write
$$w=w_1|w_2=w(1)\cdots w(r)|w(r+1)\cdots w(n).$$
If $J_1=\{s_1,\cdots,s_{r-1}\}$ and $J_2=J\setminus J_1$, then Lemma \ref{L:Parabolic perm_structure} implies that $w_1$ and $w_2$ also avoid $3412$, $52341$ and $635241$ as permutations in $W_{J_1}\simeq\mfS_r$ and $W_{J_2}\simeq\mfS_{n-r}$ respectively.  Since either $r$ or $n-r$ is greater than $1$ we will assume, without loss of generality, that $r>1$ and $S(w_1)\neq \emptyset$.  By induction, there exists $r'<r$ for which $s_{r'}\in S(w_1)$ and $w_1$ avoids $3|12$ and $23|1$ with respect to position $r'$.  Now Lemma \ref{L:Parabolic perm_structure} implies $w$ also avoids $3|12$ and $23|1$ with respect to position $r'$.  But $S(w_1)\subseteq S(w)$ and hence $s_{r'}\in S(w)$.  This completes the proof.
\end{proof}

\begin{proof}[Proof of Theorem \ref{T:main2}]
By Proposition \ref{P:BP_complete}, it suffices to show that $w\in W$ avoids the patterns $3412$, $52341$ and $635241$ if and only if $w$ has a complete BP decomposition.  First assume the $w\in W$ avoids the patterns $3412$, $52341$ and $635241$.  We will show $w$ has a complete BP decomposition by induction on $\ell(w)$.  First note that if $w=e$, then the theorem is vacuously true.  If $w\neq e$, then by Theorem \ref{T:main} and Proposition \ref{P:exists_BP}, there exists $r<n$ such that $s_r\in S(w)$ and the parabolic decomposition $w=vu$ with respect to $J=S\setminus\{s_r\}$ is a BP decomposition.  Lemma \ref{L:Parabolic perm_structure} implies that $u$ also avoids the patterns $3412$, $52341$ and $635241$.  Since $s_r\in S(w)$, we must have $\ell(u)<\ell(w)$ and by induction, we are done.

Conversely, assume $w$ has a complete BP decomposition.  In particular, there exists $r<n$ such that the parabolic decomposition $w=vu$ with respect to $J=S\setminus\{s_r\}$ is a BP decomposition.  Note that if $s_r\notin S(w)$, then $w=u$.  Without loss of generality, we can assume $s_r\in S(w)$ and hence $\ell(u)<\ell(w)$.  By induction on $\ell(w)$, it suffices to show that if $u$ avoids $3412$, $52341$ and $635241$, then $w$ avoids those same patterns.  Write $w=w_1|w_2$ and $u=u_1|u_2$ with respect to position $r$ as in Lemma \ref{L:Parabolic perm_structure}.  Since $u$ avoids $3412$, $52341$ and $635241$, each subword $w_1$ and $w_2$ must also avoid these patterns.  Hence if $w$ contains one of $3412$, $52341$ or $635241$, it must contain this pattern using entries in both $w_1$ and $w_2$.  But then $w$ must contain either $3|12$ or $23|1$ with respect to position $r$.  Theorem \ref{T:main} implies that $w$ cannot have a BP decomposition with respect to $J$ which is a contradiction.  This completes the proof.\end{proof}

\section{Examples}

In this section we illustrate Theorems \ref{T:main} and \ref{T:main2} with several examples and write out the corresponding fiber bundle structures (or lack thereof) geometrically.  In each of these examples, we fix a flag $E_\bull:=E_1\subset E_2\subset E_3\subset \K^4$ where $\dim(E_i)=i$.

\begin{example}\label{Ex:1}
\emph{The Schubert variety indexed by $w=3241$ is
$$X_{3241}=\{V_\bull=(V_1\subset V_2\subset V_3\subset \K^4) \ |\ V_2\subset E_3\}.$$
It is easy to check that $w$ avoids the split patterns $3|12$ and $23|1$ with respect to positions $r=1,2$, but contains $23|1$ with respect to $r=3$.   Theorem \ref{T:main} implies that the projections maps $\pi_1,\pi_2$ are fiber bundle maps on $X_w$ while $\pi_3$ is not.  In particular, we have fibers
\begin{align*}
\pi_1^{-1}({\color{red} V_1})&=\{(V_2\subset V_3)\ |\ {\color{red} V_1}\subset V_2\subset E_3\}\simeq X_{1342}\\
\pi_2^{-1}({\color{red} V_2})&=\{(V_1\subset V_3)\ |\ V_1\subset {\color{red} V_2}\subset V_3\}\simeq \Fl(2)\times\Fl(2)\\
\pi_3^{-1}({\color{red} V_3})&=\{(V_1\subset V_2)\ |\ V_2\subset E_3\cap {\color{red} V_3}\}\simeq
\begin{cases}
\Fl(2) & \text{if}\quad \dim({\color{red} V_3}\cap E_3)=2\\
\Fl(3) & \text{if}\quad {\color{red} V_3}=E_3.
\end{cases}
\end{align*}
Theorem \ref{T:main2} implies that $X_w$ has a complete parabolic bundle structure.  Indeed, the permutations $231, 213$ and $123$ all induce complete parabolic bundle structures on $X_w$ while permutations 312, 321 and 123 do not.  For example, if $\sigma=231$ then the corresponding complete BP decomposition of $w$ in accordance to Proposition \ref{P:BP_complete} is $w=(s_1s_2)(s_3)(s_1)$
and the complete parabolic bundle structure on $X_w$ is
\begin{equation*}\label{Eq:Flag_bundle_map_Schubert ex}
X_w\overset{\pi^{\{1,2,3\}}_{\{2,3\}}}{\twoheadrightarrow} X_{2}\overset{\pi^{\{2,3\}}_{\{2\}}}{\twoheadrightarrow} X_1\twoheadrightarrow pt
\end{equation*}
where $X_2=\{V^{\{2,3\}}_\bull \ |\ V_2\subset E_3\}$ and $X_1=\{V^{\{2\}}_\bull \ |\ V_2\subset E_3\}.$
The fibers of each of the maps $\pi^{\{1,2,3\}}_{\{2,3\}}$ and $\pi^{\{2,3\}}_{\{2\}}$ are both isomorphic to $\Fl(2)$ while $X_1$ is isomorphic to $\Gr(2,3)$.}
\end{example}

\begin{example}\label{Ex:2}
\emph{The Schubert variety indexed by $w=4231$ is
$$X_{4231}=\{V_\bull=(V_1\subset V_2\subset V_3\subset \K^4) \ |\ \dim(V_2\cap E_2)\geq 1\}.$$
In this case $w$ contains $3|12$ with respect to $r=1$, contains $23|1$ with respect $r=3$ and avoids both split patterns with respect to $r=2$.  Hence $\pi_1,\pi_3$ are not fiber bundle maps while $\pi_2$ is a fiber bundle map on $X_{4231}$.  In particular,
\begin{align*}
\pi_1^{-1}({\color{red} V_1})&=\{(V_2\subset V_3)\ |\ \dim(V_2\cap E_2)\geq 1\}\simeq
\begin{cases}
X_{1342} & \text{if}\quad \dim({\color{red} V_1}\cap E_2)=0\\
\Fl(3) & \text{if}\quad {\color{red} V_1}\subset E_2.
\end{cases}\\
\pi_2^{-1}({\color{red} V_2})&=\{(V_1\subset V_3)\ |\ V_1\subset {\color{red} V_2}\subset V_3\}\simeq \Fl(2)\times\Fl(2)\\
\pi_3^{-1}({\color{red} V_3})&=\{(V_1\subset V_2)\ |\ \dim(V_2\cap E_2)\geq 1\}\simeq
\begin{cases}
X_{3124} & \text{if}\quad \dim({\color{red} V_3}\cap E_2)=1\\
\Fl(3) & \text{if}\quad E_2\subset{\color{red} V_3}.
\end{cases}
\end{align*}
Again, Theorem \ref{T:main2} implies that $X_w$ has a complete parabolic bundle structure.  For example, if $\sigma=213$, then the complete parabolic bundle structure on $X_w$ is
\begin{equation*}\label{Eq:Flag_bundle_map_Schubert ex}
X_w\overset{\pi^{\{1,2,3\}}_{\{1,2\}}}{\twoheadrightarrow} X_{2}\overset{\pi^{\{1,2\}}_{\{2\}}}{\twoheadrightarrow} X_1\twoheadrightarrow pt
\end{equation*}
where $X_2=\{V^{\{1,2\}}_\bull \ |\ \dim(V_2\cap E_2)\geq 1\}$ and $X_1=\{V^{\{2\}}_\bull \ |\ \dim(V_2\cap E_2)\geq 1\}.$
The fibers of each of the maps $\pi^{\{1,2,3\}}_{\{1,2\}}$ and $\pi^{\{1,2\}}_{\{2\}}$ are both isomorphic to $\Fl(2)$ while $X_1$ is isomorphic to $X^{\{s_1,s_3\}}_{2413}\subset \Gr(2,4)$.  The corresponding complete BP decomposition of $w$ is $w=(s_1s_3s_2)(s_1)(s_3)$.}
\end{example}

\begin{example}\label{Ex:3}
\emph{The Schubert variety indexed by $w=3412$ is
$$X_{3412}=\{V_\bull=(V_1\subset V_2\subset V_3\subset \K^4) \ |\ V_1\subset E_3,\ E_1\subset V_3\}.$$
In this case $w$ contains either $3|12$ or $23|1$ with respect to every $r$.  Hence all projections $\pi_r$ are not fiber bundle maps on $X_{3412}$.  In particular,
\begin{align*}
\pi_1^{-1}({\color{red} V_1})&=\{(V_2\subset V_3)\ |\ E_1\subset V_3\}\simeq
\begin{cases}
X_{1342} & \text{if}\quad \dim({\color{red} V_1}\cap E_1)=0\\
\Fl(3) & \text{if}\quad {\color{red} V_1}= E_1.
\end{cases}\\
\pi_2^{-1}({\color{red} V_2})&\simeq
\begin{cases}
X_{1234} & \text{if}\quad \dim({\color{red} V_2}\cap E_3)=1,\ \dim({\color{red} V_2}\cap E_1)=0\\
X_{1243} & \text{if}\quad E_1\subset {\color{red} V_2},\ \dim({\color{red} V_2}\cap E_3)=1\\
X_{2134} & \text{if}\quad {\color{red} V_2}\subset E_3,\ \dim({\color{red} V_2}\cap E_1)=0\\
\Fl(2)\times\Fl(2) & \text{if}\quad  E_1\subset{\color{red} V_2}\subset E_3\\
\end{cases}\\
\pi_3^{-1}({\color{red} V_3})&=\{(V_1\subset V_2)\ |\ V_1\subset E_3\}\simeq
\begin{cases}
X_{3124} & \text{if}\quad \dim({\color{red} V_3}\cap E_3)=2\\
\Fl(3) & \text{if}\quad {\color{red} V_3}=E_3.
\end{cases}
\end{align*}
Theorem \ref{T:main2} implies that $X_{3412}$ has no complete parabolic bundle structure.  It is easy to check that $\pi^{\{1,2,3\}}_{\{2,3\}}$ and $\pi^{\{1,2,3\}}_{\{1,2\}}$ are not fiber bundle maps, however $\pi^{\{1,2,3\}}_{\{1,3\}}$ is a fiber bundle map with fiber isomorphic to $\Fl(2)$.  In an attempt to continue the iterated fiber bundle, we see that neither of the projection maps $\pi^{\{1,3\}}_{\{1\}}, \pi^{\{1,3\}}_{\{3\}}$ induce fiber bundle structures on $\pi^{\{1,2,3\}}_{\{1,3\}}(X_{3412})$.  Hence, we confirm that $X_{3412}$ has no complete parabolic bundle structure.}
\end{example}

\bibliographystyle{amsalpha}
\bibliography{palindromic}

\providecommand{\bysame}{\leavevmode\hbox to3em{\hrulefill}\thinspace}
\providecommand{\MR}{\relax\ifhmode\unskip\space\fi MR }
\providecommand{\MRhref}[2]{%
  \href{http://www.ams.org/mathscinet-getitem?mr=#1}{#2}
}
\providecommand{\href}[2]{#2}
\begin{thebibliography}{BMB07}

\bibitem[AB]{AB14}
Hiraku Abe and Sara~C. Billey, \emph{Consequences of the lakshmibai-sandhya
  theorem: the ubiquity of permutation patterns in schubert calculus and
  related geometry}, To appear in the Proc. Math. Soc. of Japan.

\bibitem[BMB07]{BMB07}
Mireille Bousquet-M{\'e}lou and Steve Butler, \emph{Forest-like permutations},
  Ann. Comb. \textbf{11} (2007), no.~3-4, 335--354. \MR{2376109}

\bibitem[B{\'o}n98]{Bo98}
Mikl{\'o}s B{\'o}na, \emph{The permutation classes equinumerous to the smooth
  class}, Electron. J. Combin. \textbf{5} (1998), Research Paper 31, 12 pp.\
  (electronic).

\bibitem[BP05]{BP05}
Sara Billey and Alexander Postnikov, \emph{Smoothness of {S}chubert varieties
  via patterns in root subsystems}, Adv. in Appl. Math. \textbf{34} (2005),
  no.~3, 447--466.

\bibitem[BW03]{BW03}
Sara~C. Billey and Gregory~S. Warrington, \emph{Maximal singular loci of
  {S}chubert varieties in {${\rm SL}(n)/B$}}, Trans. Amer. Math. Soc.
  \textbf{355} (2003), no.~10, 3915--3945 (electronic). \MR{1990570}

\bibitem[Deo90]{De90}
Vinay~V. Deodhar, \emph{A combinatorial setting for questions in
  {K}azhdan-{L}usztig theory}, Geom. Dedicata \textbf{36} (1990), no.~1,
  95--119. \MR{1065215}

\bibitem[GR02]{GR02}
V.~Gasharov and V.~Reiner, \emph{Cohomology of smooth {S}chubert varieties in
  partial flag manifolds}, J. London Math. Soc. (2) \textbf{66} (2002), no.~3,
  550--562. \MR{1934291}

\bibitem[Hai]{Ha92}
Mark Haiman, \emph{Enumeration of smooth {S}chubert varieties}, preprint,
  unpublished.

\bibitem[LS90]{LS90}
V.~Lakshmibai and B.~Sandhya, \emph{Criterion for smoothness of {S}chubert
  varieties in {${\rm Sl}(n)/B$}}, Proc. Indian Acad. Sci. Math. Sci.
  \textbf{100} (1990), no.~1, 45--52.

\bibitem[OY10]{OY10}
Suho Oh and Hwanchul Yoo, \emph{Bruhat order, rationally smooth {S}chubert
  varieties, and hyperplane arrangements}, DMTCS Proceedings \textbf{FPSAC}
  (2010).

\bibitem[RS]{RS14}
Edward Richmond and William Slofstra, \emph{{B}illey-{P}ostnikov decompositions
  and the fibre bundle structure of {S}chubert varieties}, To appear in Math.
  Ann., arXiv:1408.0084.

\bibitem[RS15]{RS15}
\bysame, \emph{Staircase diagrams and enumeration of smooth {S}chubert
  varieties}, arXiv:1510.06060.

\bibitem[Rya87]{Ry87}
Kevin~M. Ryan, \emph{On {S}chubert varieties in the flag manifold of {${\rm
  Sl}(n,\mathbb{C})$}}, Mathematische Annalen \textbf{276} (1987), no.~2,
  205--224.

\bibitem[UW13]{UW13}
Henning Ulfarsson and Alexander Woo, \emph{Which {S}chubert varieties are local
  complete intersections?}, Proc. Lond. Math. Soc. (3) \textbf{107} (2013),
  no.~5, 1004--1052. \MR{3126390}

\bibitem[Wol89]{Wo89}
James~S Wolper, \emph{A combinatorial approach to the singularities of
  {S}chubert varieties}, Advances in Mathematics \textbf{76} (1989), no.~2,
  184--193.

\bibitem[WY06]{WY06}
Alexander Woo and Alexander Yong, \emph{When is a {S}chubert variety
  {G}orenstein?}, Adv. Math. \textbf{207} (2006), no.~1, 205--220. \MR{2264071}

\end{thebibliography}

\end{document}